\numberwithin{equation}{section}
\theoremstyle{definition}
\newtheorem{subsec}[theorem]{ \ }
\newcommand{\Ker}{\operatorname{Ker}}
\newcommand{\Ima}{\operatorname{Im}}
\newcommand{\Hom}{\operatorname{Hom}}
\newcommand{\Hrm}{\operatorname{H}}
\newcommand{\HS}{\operatorname{HS}}
\newcommand{\HH}{\operatorname{HH}}
\newcommand{\HHS}{\operatorname{HHS}}
\newcommand{\one}{\operatorname{\mathrm{\mathbf{1}}}}
\newcommand{\id}{\operatorname{id}}
\newcommand{\Z}{\operatorname{\mathbb{Z}}}
\begin{document}

% Title of document, usually lower case except for first word
% and proper nouns.  Avoid unnecessary symbols.
\title{Symmetric Hochschild cohomology of twisted group algebras}

% If the title is too long for the running head, use
% the following command to specify a short title:
%\shorttitle{Shorter title}

% First Author
\author{Tiberiu Cocone\cb t}

% First Author's email address.  Must come before \address.
\email{tiberiu.coconet@math.ubbcluj.ro}

% First author's postal address, with *line breaks* like below.  Do
% not use \\ to separate lines.  It will appear on one line in the
% article, but will be used exactly as typed below to send a
% complimentary copy of the journal, so ensure that it is a complete
% postal address with line breaks as would appear on an envelope.
%
% Also, if it is not obvious, please add a comment with % that says
% what part is a district within a city, what part is a city,
% what part is a region or province and what part is a postal code.
%
% We recommend using the most current address possible.
% If you want the journal copy sent to a different address than
% the one below, please let Dan Christensen <jdc@uwo.ca> know.
\address{Babe\c s-Bolyai University,
Faculty of Economics and Business Administration,
 Str. Teodor Mihali, nr. 58-60,
 % postalcode 
 RO-400591 Cluj-Napoca,
 Romania.
%second adress (second afiliation) of the first author 
 Department of Mathematics,
Technical University of Cluj-Napoca,
Str. G. Baritiu 25,
 Cluj-Napoca  % postalcode 
 400027, Romania.}

% If needed, use a \thanks command, but not inside the author command.
 \thanks{This work was supported by a grant of the Romanian Ministry of Education and Research, CNCS - UEFISCDI, project number PN-III-P1-1.1-TE-2019-0136, within PNCDI III.}

% Second Author.
\author{Constantin-Cosmin Todea}
% Second Author's email address.  Must come before \address.
\email{Constantin.Todea@math.utcluj.ro}
% Second author's postal address.  Please read the instructions
% given above.  This one gives an example of an American address.
\address{Department of Mathematics,
Technical University of Cluj-Napoca,
Str. G. Baritiu 25,
 Cluj-Napoca % postalcode 
 400027, Romania.}

% Additional authors done in the same way.

% If the author names are too long for the running head, use
% the following command to specify a shorter version:
%\shortauthors{DOE, SMITH \andname\ WILLIAMS}

% AMS 2010 Mathematics Subject Classification.  List one or several,
% separated by commas, ending in a period.
% See http://www.ams.org/mathscinet/msc/msc2010.html for the 2010
% numbering system.
\classification{16E40, 20J06.}
% Use \classification[2000]{12X34, 55X78.} if you must use codes
% from the 2000 numbering system.

% Keywords of the article, usually singular, no leading caps.
% Separated by commas, ending with period.
\keywords{Hochschild cohomology, symmetric group, group cohomology, connecting homomorphism, twisted group algebra.}

% Abstract comes before maketitle
\begin{abstract}
We show that there is an action of the symmetric group on the Hochschild cochain complex of a twisted group algebra with coefficients in a bimodule. This allows us to define the symmetric  Hochschild cohomology of twisted group algebras, similarly to Staic's construction of symmetric group cohomology. We give explicit embeddings and connecting homomorphisms between the symmetric cohomology spaces and symmetric Hochschild cohomology of twisted group algebras.
% Abstract text, usually no more than 200 words.
% Avoid bibliographic references (\cite) and complicated mathematics.
% Please do not use custom macros here, as this abstract has to
% be able to stand alone.  You may use standard tex/latex/AMS macros.
\end{abstract}

% Leave these items like this, and the journal will fill them in.
%\received{Month Day, Year}   % receive date (for example: October 11, 1999)
%\revised{Month Day, Year}    % date of revision; omit, if no revision;
                             % if multiple revisions, separate by commas
%\published{Month Day, Year}  % publish date
%\submitted{Bill Murray}      % Name of Journal's Editor, who handled Article
%\volumeyear{2012} % Volume Year
%\volumenumber{14} % Volume Number
%\issuenumber{1}   % Issue Number
%\startpage{1}     % PageNumber of first page
%\articlenumber{1} % Sequence number of article within issue
% If copyright is retained by author, comment this out:
%\owner{International Press}

\maketitle

% Text of Document.  Use constructs such as \section, \subsection,
% \begin{theorem} ... \end{theorem}, \begin{proof} ... \end{proof}, etc.

\section{Introduction}\label{sec1}

With a topological motivation in mind, Staic defined in \cite{St1} the symmetric cohomology of groups by constructing an action of the symmetric
group $\Sigma_{n+1}$ on $C^n(G,M)$ (here $G$ is a group, $M$ a $G$-module and $n$ a nonnegative integer). Since this action gives a subcomplex
$CS^n(G,M)=(C^n(G,M))^{\Sigma_{n+1}}$, of the classical group cohomology cochain complex, Staic obtained a new cohomology theory $\HS^n(G,M)$, called the symmetric cohomology
and a natural map
$\HS^n(G,M)\rightarrow \Hrm^n(G,M)$. Further, in \cite{St2} the author  managed to characterize the extensions of $G$ by $M$ which correspond to $\HS^2(G,M)$.
In \cite {Si1} Singh provides a similar construction for the topological groups and for  the Lie groups. Under some restrictions we were able  to prove in \cite{To} that the symmetric cohomology of groups has the structure of a Mackey functor. There is a growing interest in the study of symmetric cohomology in the last years. We mention \cite{Pi1}, where Pirashvili investigates the relation with the so-called exterior cohomology; we also mention \cite{BaNeSi} and \cite{Pi2}.

Let $G$ be a finite group, let $k$ be a field and consider $\alpha\in Z^2(G, k^{\times}),$ a $2$-cocycle. Here $k^{\times}$ is the multiplicative group of units in $k$. Let $k_{\alpha}G$ be the twisted group algebra corresponding to $[\alpha]\in\Hrm^2(G,k^{\times})$ and let $M$ be a $(k_{\alpha}G,k_{\alpha}G)$-bimodule. The Hochschild cohomology of algebras is an important invariant used in various settings. After reviewing, in Section \ref{sec2}, basic facts about Hochschild cohomology and group cohomology, we show in Section \ref{sec3} that there is a well-defined action (\ref{action_on_chain}) of the symmetric group $\Sigma_{*+1}$ on the Hochschild cochain complex
$C^*(k_{\alpha}G,M).$ By considering  the invariants we obtain the symmetric Hochschild cochain complex
$$CS^*(k_{\alpha}G,M)=(C^*(k_{\alpha}G,M))^{\Sigma_{*+1}}.$$
Its homology is called symmetric Hochschild cohomology of $k_{\alpha}G$ with coefficients in $M$ and is denoted $\HHS^*(k_{\alpha}G,M)$. These results are presented in
Proposition  \ref{prop32} and Definition \ref{defn33}.  We choose $\mathcal{B}=\{\bar{g}\mid g\in G\},$ a $k$-basis of the twisted group algebra $k_{\alpha}G$ and, we denote by $"\cdot "$ the multiplication in this twisted group algebra which is extended $k$-linearly from $\bar{g}\cdot \bar{h}=\alpha(g,h)\overline{gh},$
  for all $g,h\in G.$ The same notation is used for the right or for the left action of the twisted group algebra on bimodules.

For group algebras, the Hochschild cohomology admits an additive centralizers group cohomology decomposition. This decomposition was first introduced by  Burghelea \cite{Bu}, see \cite{LiZh} and \cite{SiWi} for more details with respect to this decomposition. A similar additive decomposition for $\HH^n(k_{\alpha}G)$ can be given in terms of some group cohomology spaces (with non-trivial coefficients), indexed by a system of representatives of the conjugacy classes of $G$, which we denote by $X$. Based on this decomposition we explicit two $k$-linear maps between Hochschild cohomology of  our twisted group algebra and these group cohomology spaces. These maps will be denoted $\nu_{G,x,\alpha}^*$, with $x\in Z(G)$; and $\pi_{G,x,\alpha}^*$ where $x$ runs in $X$, see Section \ref{sec4}.    We denote by $\one_k$ the trivial 2-cocycle, that is $\one_k(x,y)=1_k$ for all $x,y\in G.$   If $\alpha=\textbf{1}_k$ the above maps are denoted $\pi_{G,x}^*$ and $\nu_{G,x}^*$, respectively.  In this case both maps can be defined for any $x\in X$, as we can notice from \ref{5.3}. If $\alpha=\textbf{1}_k$ then  $k\bar{x}$ is denoted  $kx$  and is clear  that it can be identified to $k$ as trivial $kC_G(x)$-module.

 In Section \ref{sec5} we will show that these maps can be defined between the symmetric cohomology of groups and the symmetric Hochschild cohomology of twisted group algebras. We denote these maps by $\nu_{G,x,\alpha}^{*,S},$ $\pi_{G,x,\alpha}^{*,S}$ and, $\pi_{G,x}^{*,S},\nu_{G,x}^{*,S}$, respectively.  One of the proposed goals  of the paper was to obtain an additive decomposition for the symmetric Hochschild cohomology of twisted group algebras. In Section \ref{sec5} we prove our first main result,  Theorem \ref{thm11}. In  statement a) of this theorem we describe explicitly an embedding of additive decomposition (indexed by all elements of $Z(G)$)  of symmetric cohomology spaces (with non-trivial coefficients) into the symmetric Hochschild cohomology of twisted group algebras. Statement b) is about the similar embedding applied to  the case $\alpha=\textbf{1}_k$; but, in this situation  the additive decomposition is indexed by all elements of $G$ and, the symmetric cohomology spaces are with trivial coefficients.

For the rest of the Introduction, of the second part of Section \ref{sec1'} and, throughout Section \ref{sec6} we allow  $k$ to be any commutative ring; also, we consider   $G$-modules. Theorem \ref{thm11}  statement b) and the definitions of $\nu_{G,x}^*,\pi_{G,x}^*,\nu_{G,x}^{*,S},\pi_{G,x}^{*,S}$ can be stated without any restrictions for any commutative ring $k$.

In \cite[Proposition 5.3]{Si1} Singh developed a method to construct long exact sequences for symmetric cohomology. However the construction of long exact sequences is made under quite strong restriction of short exact sequences which possess a symmetric section compatible with the actions, see \cite[Section 5]{Pi1}. Recall that, for $f:M\rightarrow N$  a surjective homomorhism of $G$-modules, we say that  a set map $s:N\rightarrow M$ is a symmetric section of $f$, if $(f\circ s)(n)=n, s(-n)=-s(n)$ for any $g\in G,n\in N$; it is compatible with the actions, if $s(gn)=gs(n),$ for any $g\in G,n\in N$.
In subsection \ref{subsec61} we recall an explicit description  of the connecting homomorphism which appears in the long exact sequence of classical group cohomology.  Proposition \ref{prop62}  gives an alternative, more explicit, definition of the connecting homomorphism which appears in the long exact sequence of symmetric cohomology. But, we are still forced to construct these long exact sequences for short exact sequences which admits a symmetric action compatible the actions. We denote these homomorphisms by $\beta_G^{*,S}$, see Proposition \ref{prop62},  where the group $G$ and the $G$-modules will be obvious in the respective context.

Starting with three commutative rings and with a short exact sequence of their abelian groups (which has a symmetric section), we are able to construct a short exact sequence of $G$-modules which admits a symmetric action compatible with the actions, see (\ref{eq61}), (\ref{eq62}). Now, the same machinery like in \ref{subsec61} and Propostion \ref{prop62}, allow us to obtain in the second main result of this paper, Theorem \ref{thm13}, a connecting homomorphism between symmetric Hochschild cohomology of some of group algebras. In Theorem \ref{thm13} we show that this connecting homomorphism, denoted $\mathbb{B}_G^*$ (see (\ref{eq63})), is compatible with $\beta_{C_G(x)}^{*,S}$ (defined in (\ref{eq65})) , through $\pi^{*,S}_{G,x}$, where $x\in X$.

We end Section \ref{sec6} with the proof of Proposition \ref{prop62}, Theorem \ref{thm13} and with further remarks. In these remarks we mention examples of a short exact sequences for which we can apply Theorem \ref{thm13}.

It is our strong belief that, although the modern approach of homological algebra is about derived functors and derived categories, explicit description of cochain complexes and various cohomology maps could have future applications. These methods, although technical and sometimes lengthier, should not be forgotten.
%Bockstein homomorphisms in group cohomology are the connecting homomorphism in the long exact sequence obtained from some short exact sequence
%of coefficients. It is a tool for comparing integral and mod-$p$ cohomology and has applications in Steenrod operations.
\section{Main results}\label{sec1'}
With the notations from the Introduction, we collect all the main results, explained in the first part of Introduction.
For  $k_{\alpha}G$ there is an additive decomposition similar to $\HH^*(kG)$, which can be recovered from \cite[Lemma 3.5]{Wi}:
  \begin{equation} \label{eq11}
  \HH^n(k_{\alpha}G)\cong  \bigoplus_{x\in X}\Hrm^n(C_G(x),k\bar{x})
  \end{equation}
where  $X$ is a system of representatives of the conjugacy classes of $G$ and $k\bar{x}$ is the $kC_G(x)$-module given by $$h\bar{x}=\alpha(h,x)(\alpha(x,h))^{-1}\bar{x},$$ for any $h\in C_G(x).$

For any choice of $X$, we have $Z(G)\subseteq X$ hence, we consider an embedding
\begin{equation} \label{eq12}
\bigoplus_{x\in Z(G)}\Hrm^n(C_G(x),k\bar{x})\rightarrowtail \HH^n(k_{\alpha}G).
\end{equation}
In Section \ref{sec4} we define some $k$-linear maps between cochain complexes which determine
$k$-linear maps, mentioned in Introduction, between the following cohomology spaces
$$\nu_{G,x,\alpha}^*:\Hrm^*(G,k\bar{x})\rightarrowtail \HH^*(k_{\alpha}G)$$
if $x\in Z(G)$ and,
$$\pi_{G,x,\alpha}^*:\HH^*(k_{\alpha}G)\twoheadrightarrow\Hrm^*(C_G(x),k\bar{x})$$
if $x\in X$ (see Proposition \ref{proposition4.6}).
The first theorem of this paper is the following.
\begin{theorem}\label{thm11}
Let $n\in\mathbb{Z},n\geq 0$ and $\alpha\in Z^2(G,k^{\times})$.
\begin{itemize}
\item[a)] There is an embedding of cohomology vector spaces
$$\bigoplus_{x\in Z(G)}\nu_{G,x,\alpha}^{n,S}:\bigoplus_{x\in Z(G)}\HS^n(G,k\bar{x})\rightarrowtail \HHS^n(k_{\alpha}G)$$
with its left inverse
$$\bigoplus_{x\in Z(G)}\pi_{G,x,\alpha}^{n,S}:\HHS^n(k_{\alpha}G)\twoheadrightarrow\bigoplus_{x\in Z(G)}\HS^n(G,k\bar{x}).$$
\item[b)] In the case $\alpha=\textbf{1}_k$, there is an embedding of cohomology vector spaces
$$\bigoplus_{x\in X}\nu_{G,x}^{n,S}:\bigoplus_{x\in X}\HS^n(C_G(x),k)\rightarrowtail \HHS^n(kG)$$
with its left inverse
$$\bigoplus_{x\in X}\pi_{G,x}^{n,S}:\HHS^n(kG)\twoheadrightarrow\bigoplus_{x\in X}\HS^n(C_G(x),k).$$
\end{itemize}
\end{theorem}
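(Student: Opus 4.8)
The plan is to promote the non-symmetric statement of Section~\ref{sec4} (Proposition~\ref{proposition4.6}, together with the decomposition \eqref{eq11}--\eqref{eq12}) to the symmetric setting by showing that the cochain maps already constructed there are compatible with the two $\Sigma_{n+1}$-actions involved: the Staic action on the group cochains $C^n(G,k\bar{x})$ and the Hochschild action \eqref{action_on_chain} on $C^n(k_\alpha G)$ introduced in Section~\ref{sec3}. If the cochain maps underlying $\nu_{G,x,\alpha}^*$ and $\pi_{G,x,\alpha}^*$ are $\Sigma_{n+1}$-equivariant, then they carry invariants to invariants, hence restrict to chain maps between the symmetric subcomplexes $CS^*(G,k\bar{x})$ and $CS^*(k_\alpha G)$, which by definition induce the sought maps $\nu_{G,x,\alpha}^{*,S}$ and $\pi_{G,x,\alpha}^{*,S}$ on $\HHS$ and $\HS$.

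First I would isolate, for a fixed $x\in Z(G)$, the cochain-level maps $\nu_{G,x,\alpha}$ and $\pi_{G,x,\alpha}$ from Section~\ref{sec4} and record the three explicit formulas entering the computation: the Staic action on $C^n(G,k\bar{x})$, the Hochschild action \eqref{action_on_chain}, and the definitions of $\nu_{G,x,\alpha}$ and $\pi_{G,x,\alpha}$. Since $\Sigma_{n+1}$ is generated by the adjacent transpositions $\tau_i=(i,i+1)$, it suffices to verify equivariance on each $\tau_i$, i.e. $\nu_{G,x,\alpha}(\tau_i\cdot f)=\tau_i\cdot\nu_{G,x,\alpha}(f)$ and the analogous identity for $\pi_{G,x,\alpha}$. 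This is the heart of the argument and where I expect the main obstacle: both actions carry sign and index-shuffling data, and in the Hochschild action the multiplication of $k_\alpha G$ contributes factors of the cocycle $\alpha$, while the module structure on $k\bar{x}$ is twisted by $\alpha(h,x)\alpha(x,h)^{-1}$; matching these bookkeeping factors term by term for each generator is the delicate point, and the centrality of $x$ (so that $C_G(x)=G$) is what should make the terms align in part~a).

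Once equivariance is in hand, restriction to the symmetric subcomplexes yields $\nu_{G,x,\alpha}^{S}$ and $\pi_{G,x,\alpha}^{S}$ at the cochain level, and functoriality of cohomology produces $\nu_{G,x,\alpha}^{*,S}$ and $\pi_{G,x,\alpha}^{*,S}$. The identity $\pi_{G,x,\alpha}\circ\nu_{G,x,\alpha}=\id$ established in Proposition~\ref{proposition4.6} holds before taking invariants, so it persists after restriction; passing to cohomology gives $\pi_{G,x,\alpha}^{*,S}\circ\nu_{G,x,\alpha}^{*,S}=\id$, which makes $\nu_{G,x,\alpha}^{*,S}$ a split injection with explicit left inverse $\pi_{G,x,\alpha}^{*,S}$. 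To assemble the direct sum over $Z(G)$, I would invoke the orthogonality $\pi_{G,y,\alpha}\circ\nu_{G,x,\alpha}=0$ for $x\neq y$ (the same relation that underlies \eqref{eq11}), which likewise descends to invariants, so that $\bigoplus_{x\in Z(G)}\pi_{G,x,\alpha}^{*,S}$ is a genuine left inverse of $\bigoplus_{x\in Z(G)}\nu_{G,x,\alpha}^{*,S}$; this proves statement~a).

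For statement~b) I would specialize to $\alpha=\one_k$, where the cocycle factors disappear and the module $k\bar{x}$ becomes the trivial $kC_G(x)$-module $k$. As noted in the Introduction, in this case both $\nu_{G,x}$ and $\pi_{G,x}$ are defined for every $x\in X$ with the centralizer $C_G(x)$ in place of $G$, so the same equivariance computation---now free of the $\alpha$-bookkeeping that formed the main obstacle---should go through over the larger index set $X$, and the identical split-injection and orthogonality arguments then yield the embedding $\bigoplus_{x\in X}\nu_{G,x}^{n,S}$ with left inverse $\bigoplus_{x\in X}\pi_{G,x}^{n,S}$.
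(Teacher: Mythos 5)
Your proposal is correct and follows essentially the same route as the paper: the paper's proof likewise reduces everything to the $\Sigma_{n+1}$-equivariance of the cochain maps (its Proposition \ref{proposition5.1}), combined with the facts that they commute with the differentials and satisfy the splitting identity at the cochain level (Lemma \ref{lemma4.5}), so that restriction to invariants and passage to cohomology yields the embeddings and their left inverses. The only cosmetic difference is that you phrase the direct-sum assembly via an explicit orthogonality relation $\pi_{G,y,\alpha}\circ\nu_{G,x,\alpha}=0$ for $x\neq y$, which the paper packages implicitly into Lemma \ref{lemma4.5}, statements c) and e).
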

Recall that for the rest of section we allow to work with commutative ground rings and we use $G$-modules. 
The construction which we expose in the next subsection is considered folklore by experts, but we need to present the details for further application. Let $n\in\Z,n\geq 0$.
\begin{subsec}\textit{Connecting homomorphism in group cohomology, defined explicitly.}\label{subsec61}

Let
\begin{equation}\label{eq61'}
\xymatrix{0\ar[r]&A\ar[r]^-{\iota}
&B\ar[r]^-{\pi}&C\ar[r] &0}
\end{equation}
be a short exact sequence of $G$-modules. Since $\iota$ is injective,  the isomorphism of $kG$-modules $\iota_1:A\to \Ker \pi, \iota_1(a)=\iota(a)$ for any $a\in A$, has an inverse, which we denote by $r_{\iota}: \Ker \pi\rightarrow A$; hence $r_{\iota}$ is an isomoprhism of $kG$-modules, such that $(r_{\iota}\circ \iota)(a)=a$ for any $a\in A$ and, $(\iota\circ r_{\iota})(b)=b $ for any $b\in\Ker \pi$. Similarly, since $\pi$ is surjective, it has a section (right inverse set map) which we denote by $s_{\pi}:C\rightarrow B$. We will work with group cohomology  and various coefficient modules. For this reason we denote by $\partial_A$ the differential which gives $\Hrm^*(G,A),$ respectively $\partial _B,\partial_C$ etc.
 With the  notations above it is easy to prove that if $\varphi\in\Ker \partial_C^n$ then $\Ima\partial_B^n(s_{\pi}\circ \varphi)\subseteq \Ker \pi$ and $r_{\iota}\circ\partial_B^n(s_{\pi}\circ\varphi)\in\Ker\partial_A^{n+1}.$

By the Long Exact Sequence Theorem applied to (\ref{eq61'}) there is a connecting homomorphism, which we denote by
$$\beta_G^n:\Hrm^n(G,C)\rightarrow\Hrm^{n+1}(G,A).$$
The above statements allow us to define $\beta_G^n$ explicitly, as follows
\[\beta_G^n([\varphi]):=[\beta_G^n(\varphi)],\quad \beta_G^n(\varphi)=r_{\iota}\circ\partial_B^n(s_{\pi}\circ \varphi),\]
for any $\varphi\in\Ker\partial_C^n.$
\end{subsec}

\begin{proposition}\label{prop62} Let (\ref{eq61'}) be a short exact sequence of $G$-modules admitting a symmetric section compatible with the actions. Then there is a connecting homomorphism in symmetric cohomology
$$\beta_G^{n,S}:\HS^n(G,C)\rightarrow\HS^{n+1}(G,A)$$
given by $\beta_G^{n,S}([\varphi]):=[\beta_G^n(\varphi)]$ for any $\varphi\in\Ker\partial_C^n \cap CS^n(G,C).$
\end{proposition}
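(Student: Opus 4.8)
The plan is to show that the explicit formula $\beta_G^{n}(\varphi)=r_{\iota}\circ\partial_B^n(s_{\pi}\circ\varphi)$ from \ref{subsec61}, which already defines the connecting homomorphism on ordinary cochains, restricts well to symmetric cochains and descends to symmetric cohomology. Concretely, there are three things to verify: first, that $\beta_G^n$ sends a symmetric cocycle $\varphi\in\Ker\partial_C^n\cap CS^n(G,C)$ to a \emph{symmetric} cochain, i.e. $\beta_G^n(\varphi)\in CS^{n+1}(G,A)$; second, that it sends symmetric coboundaries to symmetric coboundaries, so that the induced map on cohomology classes is well-defined; and third, that the resulting map is a homomorphism. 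The second and third points are essentially inherited from the classical case in \ref{subsec61}, since the symmetric cochain complex $CS^*(G,-)$ is a subcomplex of $C^*(G,-)$ and $\beta_G^n$ is already known to be well-defined and additive on the larger complex; the new content is entirely the first point, compatibility with the $\Sigma_{n+1}$- and $\Sigma_{n+2}$-actions.

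The key step, and the main obstacle, is therefore to prove that $\beta_G^n$ maps $\Sigma_{n+1}$-invariant cochains to $\Sigma_{n+2}$-invariant cochains. I would attack this by factoring $\beta_G^n$ as the composite $r_{\iota}\circ\partial_B^n\circ(s_{\pi}\circ-)$ and checking equivariance of each factor. The map $\varphi\mapsto s_{\pi}\circ\varphi$ is precomposition-free and postcomposes with the set map $s_{\pi}$; here the hypotheses are exactly what is needed, since $s_{\pi}$ being a symmetric section compatible with the actions means $s_{\pi}(gc)=g\,s_{\pi}(c)$ and $s_{\pi}(-c)=-s_{\pi}(c)$, which is precisely the data ensuring that postcomposition with $s_{\pi}$ carries $CS^n(G,C)$ into $CS^n(G,B)$. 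Next, the differential $\partial_B^n$ is a map of complexes that is known (from Staic's construction, the defining property of $CS^*$) to preserve the symmetric subcomplexes, so $\partial_B^n$ sends $CS^n(G,B)$ into $CS^{n+1}(G,B)$. Finally $r_{\iota}$ is a genuine $kG$-module isomorphism, and postcomposition with a module homomorphism is manifestly $\Sigma_{n+2}$-equivariant, so it carries $CS^{n+1}(G,\Ker\pi)$ into $CS^{n+1}(G,A)$.

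The delicate point is that $s_{\pi}$ is only a set map, not a module homomorphism, so I cannot simply invoke functoriality of $C^*(G,-)$ in the coefficients for that factor; I must instead check by hand, using the explicit formula for the $\Sigma_{n+1}$-action on cochains, that $s_{\pi}\circ\varphi$ is invariant whenever $\varphi$ is. This is where both defining properties of a symmetric section enter: the permutation action on cochains involves sign factors and reindexing of the group arguments together with a twist by group elements, and the conditions $s_{\pi}(-c)=-s_{\pi}(c)$ and $s_{\pi}(gc)=g\,s_{\pi}(c)$ are exactly calibrated to commute $s_{\pi}$ past these sign and action twists. I would carry out the verification on the generators of $\Sigma_{n+1}$ (the adjacent transpositions), reducing the invariance of $s_{\pi}\circ\varphi$ to the invariance of $\varphi$ one generator at a time. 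Once this equivariance lemma is established, the composite $\beta_G^n$ restricts to a map $CS^n(G,C)\to CS^{n+1}(G,A)$ commuting with the differentials, and the passage to cohomology together with well-definedness on classes follows immediately from the classical case treated in \ref{subsec61}.
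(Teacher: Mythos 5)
Your proposal is correct, and it reaches the same reduction as the paper --- that it suffices to show $\beta_G^n(\varphi)\in CS^{n+1}(G,A)$ for every symmetric cocycle $\varphi$, the passage to cohomology classes being inherited from \ref{subsec61} and the arguments of \cite[Proposition 5.3]{Si1} --- but it establishes that reduction by a genuinely different decomposition. The paper checks the $\Sigma_{n+2}$-invariance of the $(n+1)$-cochain $\beta_G^n(\varphi)$ head-on: for each generator $(i,i+1)$ it expands the entire differential $\partial_B^n(s_{\pi}\circ\varphi)$, pushes the sign and the $g_1$-twist through $r_{\iota}$ (a module map) and through each individual $s_{\pi}$-term, reorders the resulting terms, and only then invokes the invariance of $\varphi$; this is a long computation repeated generator by generator. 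You instead isolate the one genuinely new fact as a lemma one degree lower: postcomposition with $s_{\pi}$ carries $CS^n(G,C)$ into $CS^n(G,B)$, which is a short check on each generator (only $\sigma_1$ needs the compatibility $s_{\pi}(gc)=g\,s_{\pi}(c)$, since it is the only generator whose action twists by a group element; the others need only $s_{\pi}(-c)=-s_{\pi}(c)$). After that, the differential step is exactly Staic's compatibility result \cite[Proposition 5.1]{St1} --- which the paper's inline computation in effect re-proves for the particular cochain $s_{\pi}\circ\varphi$ --- and postcomposition with the $kG$-isomorphism $r_{\iota}$ is trivially equivariant. Your route buys brevity and reusability: the same lemma also makes well-definedness on classes transparent, because the coboundary witness produced by the classical snake-type argument is then visibly symmetric, whereas the paper (like you) delegates this point to Singh. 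The paper's route buys self-containedness, at the cost of a much longer display. One small point you should make explicit in a finished write-up: your third factor needs the observation from \ref{subsec61} that $\partial_B^n(s_{\pi}\circ\varphi)$ takes values in $\Ker\pi$ (this uses that $\varphi$ is a cocycle), and that $\Ker\pi$ is a $G$-submodule of $B$, so that $\Sigma_{n+2}$-invariance as a $B$-valued cochain is the same as invariance as a $\Ker\pi$-valued cochain; without this, $r_{\iota}$, which is defined only on $\Ker\pi$, cannot be applied.
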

In order to obtain a connecting homomorphism in symmetric Hochschild cohomology we need the following setup. Let $k_1,k_2$ and $k_3$ be commutative rings and let
\begin{equation}\label{eq61}
\xymatrix{0\ar[r]&(k_1,+)\ar[r]^{\iota}&(k_2,+)\ar[r]^{\pi}&(k_3,+)\ar[r]&0}
\end{equation}
be a short exact sequence of abelian groups which admits a symmetric section $s_{\pi}:k_3\rightarrow k_2$ and  $r_{\iota}:\Ker\pi \rightarrow k_1$ constructed like in subsection \ref{subsec61}.
Next, there is a short exact sequence of $G$-modules
\begin{equation}\label{eq62}
\xymatrix{0\ar[r]&(k_1G,+)\ar[r]^{{\iota}_G}&(k_2G,+)\ar[r]^{\pi_G}&(k_3G,+)\ar[r]&0},
\end{equation}
where $G$ acts by conjugation and $\iota_G(a_1g)=\iota(a_1)g, \pi_G(a_2g)=\pi(a_2)g,$
for all $a_1\in k_1,a_2\in k_2,g\in G.$
It is not difficult to see that we can choose $r_{\iota_G}:\Ker \pi_G\rightarrow k_1G$, given by $r_{\iota_G}(\sum_{g\in G} a_gg)=\sum_{g\in G} r_{\iota}(a_g)g$ for all finite sums $\sum_{g\in G} a_gg \in \Ker\pi_G$. In the same way
$s_{\pi_G}:k_3G\rightarrow k_2G$ is a symmetric section of $\pi_G$ given by $s_{\pi_G}(a_3g)=s(a_3)g$ for all $a_3\in k_3,g\in G$. The maps $r_{\iota_G}$ and  $s_{\pi_G}$ are compatible with the left actions.  But, $r_{\iota_G},s_{\pi_G}$ are also compatible with the  right $G$-action and, they are homomorphisms of $(G-G)$-bimodules. The same arguments as in the proof of Proposition \ref{prop62} will assure us that the following homomorphism
\begin{equation}\label{eq63}
\begin{split}
\mathbb{B}_G^*:\HHS^*(k_3G)\rightarrow &\HHS^{*+1}(k_1G), \quad\mathbb{B}_G^*([\varphi]):=[\mathbb{B}_G^*(\varphi)], \\
&\mathbb{B}_G^*(\varphi)=r_{\iota_G}\circ\delta_{k_2G}^*(s_{\pi_G}\circ \varphi),
\end{split}
\end{equation}
for any $\varphi\in \mathrm{CS}^*(k_3G,k_3G)$, is a well-defined homomorphism; here $\delta_{k_2G}^*$ is the Hochschild differential of $\HH^*(k_2G)$, see  \ref{subsec21}.
This homomorphism  will be called the connecting homomorphism in symmetric Hochschild cohomology. Let $x\in X$ be a representative of conjugacy classes of $G$. We can also verify that the following short exact sequence of trivial $C_G(x)$-modules
\begin{equation}\label{eq64}
\xymatrix{0\ar[r]&(k_1x,+)\ar[r]^{\iota_x}&(k_2x,+)\ar[r]^{\pi_x}&(k_3x,+)\ar[r]&0}
\end{equation}
with $\iota_x(a_1x)=\iota(a_1)x, \pi_x(a_2x)=\pi(a_2)x,$
for all $a_1\in k_1,a_2\in k_2$, is a short exact sequence which possess a symmetric section $s_{\pi_x}$ compatible with the actions and,  an isomorphism $r_{\iota_x}$ defined as in subsection \ref{subsec61}; here $s_{\pi_x}(a_3x)=s_{\pi}(a_3)x$ for all $a_3\in k_3$ and similarly for $r_{\iota_x}$. We apply Proposition \ref{prop62} to (\ref{eq64}) to obtain a homomorphism
\begin{equation}\label{eq65}\beta_{C_G(x)}^{*,S}:\HS^*(C_G(x),k_3x)\rightarrow\HS^{*+1}(C_G(x),k_1x).
\end{equation}
The above maps fits in the second main result of this paper.

\begin{theorem}\label{thm13}
Let $n\in\Z, n\geq 0$. With the above notations consider a short exact sequence like in (\ref{eq61}). Then there exists a connecting homomorphism in symmetric Hochschild cohomology $\mathbb{B}_G^n:\HHS^n(k_3G)\rightarrow\HHS^{n+1}(k_1G),$ defined in (\ref{eq63}), making the following diagram commutative
  $$\xymatrix{ \HHS^n(k_3G)\ar[d]^{\pi^{n,S}_{G,x}}\ar[r]^{\mathbb{B}_G^n} &\HHS^{n+1}(k_1G)\ar[d]^{\pi^{n+1,S}_{G,x}} \\
   \HS^n(C_G(x),k_3x)\ar[r]^{\beta_{C_G(x)}^{n,S}}&\HS^{n+1}(C_G(x),k_1x).}$$
\end{theorem}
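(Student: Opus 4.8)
The plan is to prove the square commutes already at the level of cochains and then to descend to (symmetric) cohomology, which is natural given that all four maps are defined by explicit cochain formulas. Concretely, I would fix a symmetric Hochschild cocycle $\varphi\in\mathrm{CS}^n(k_3G,k_3G)$ representing a class in $\HHS^n(k_3G)$ and compare $\pi^{n+1}_{G,x}\bigl(\mathbb{B}_G^n(\varphi)\bigr)$ with $\beta^{n}_{C_G(x)}\bigl(\pi^{n}_{G,x}(\varphi)\bigr)$ directly, unwinding the definitions in (\ref{eq63}) and in (\ref{eq65}) together with the cochain description of $\beta^{n}_{C_G(x)}(\psi)=r_{\iota_x}\circ\partial^{n}_{k_2x}(s_{\pi_x}\circ\psi)$ from subsection~\ref{subsec61}.

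The crux is that the cochain projection $\pi_{G,x}$ (applied at each of the three coefficient rings $k_1,k_2,k_3$) is compatible, coefficient by coefficient, with the sections, the retractions, and the differentials. I would verify the three intertwining relations
\begin{gather*}
\pi^{n}_{G,x}\circ s_{\pi_G}=s_{\pi_x}\circ\pi^{n}_{G,x},\qquad
\pi^{n+1}_{G,x}\circ r_{\iota_G}=r_{\iota_x}\circ\pi^{n+1}_{G,x},\\
\pi^{n+1}_{G,x}\circ\delta^{n}_{k_2G}=\partial^{n}_{k_2x}\circ\pi^{n}_{G,x}.
\end{gather*}
The first two hold because $s_{\pi_G},r_{\iota_G}$ are obtained from $s_\pi,r_\iota$ by applying them to every group-algebra coefficient (and $s_{\pi_x},r_{\iota_x}$ likewise), whereas $\pi_{G,x}$ merely extracts and relabels the coefficients indexed by the conjugacy class of $x$; extracting and then applying $s_\pi$ (resp. $r_\iota$) coincides with applying and then extracting. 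The third relation is precisely the assertion that $\pi^{*}_{G,x}$ is a morphism of complexes, which is part of the construction of the decomposition in Section~\ref{sec4}. Granting these, the computation is a formal chain:
\begin{align*}
\pi^{n+1}_{G,x}\bigl(\mathbb{B}_G^n(\varphi)\bigr)
&=\pi^{n+1}_{G,x}\bigl(r_{\iota_G}\circ\delta^{n}_{k_2G}(s_{\pi_G}\circ\varphi)\bigr)
=r_{\iota_x}\circ\pi^{n+1}_{G,x}\bigl(\delta^{n}_{k_2G}(s_{\pi_G}\circ\varphi)\bigr)\\
&=r_{\iota_x}\circ\partial^{n}_{k_2x}\bigl(\pi^{n}_{G,x}(s_{\pi_G}\circ\varphi)\bigr)
=r_{\iota_x}\circ\partial^{n}_{k_2x}\bigl(s_{\pi_x}\circ\pi^{n}_{G,x}(\varphi)\bigr)
=\beta^{n}_{C_G(x)}\bigl(\pi^{n}_{G,x}(\varphi)\bigr).
\end{align*}

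It then remains to pass to cohomology. Every map in sight restricts to the symmetric subcomplexes: the sections and retractions are $(G\text{--}G)$-bimodule maps, hence commute with the $\Sigma_{*+1}$-action; $\mathbb{B}_G^n$ preserves symmetric cochains by its construction in (\ref{eq63}); and $\pi^{n,S}_{G,x}$ is by definition the restriction of $\pi^{n}_{G,x}$ to $\mathrm{CS}^{*}$. Moreover, since $\pi^{*}_{G,x}$ is a chain map into the symmetric subcomplex, $\pi^{n}_{G,x}(\varphi)$ is a symmetric group cocycle, so that $\beta^{n}_{C_G(x)}$ is applicable to it; and as all four maps are already known to descend to the respective (symmetric) cohomology groups, the identity above for symmetric representatives yields the commutativity of the outer square $\pi^{n+1,S}_{G,x}\circ\mathbb{B}_G^n=\beta^{n,S}_{C_G(x)}\circ\pi^{n,S}_{G,x}$.

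The main obstacle will be pinning down the coefficient-wise nature of $\pi_{G,x}$ precisely enough to justify the three intertwining relations — above all the chain-map relation at the middle ring $k_2$. One must track carefully how the Burghelea-type projection extracts the $x$-conjugacy-class component of a Hochschild cochain, and confirm that the conjugation action of $C_G(x)$ on $k_jx$ used to form $\Hrm^{*}(C_G(x),k_jx)$ matches the restriction of the Hochschild differential through $\pi_{G,x}$. In the present situation, where $\alpha=\mathbf{1}_k$ and each $k_jx\cong k_j$ is a trivial $kC_G(x)$-module, this is the cleanest case of the decomposition, so once the matching is settled at $k_2$ the remaining verifications are direct and the diagram follows formally.
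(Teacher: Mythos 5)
Your proposal is correct and takes essentially the same route as the paper: the paper also verifies the square at the level of cochains by extracting the coefficient of $x$ in both composites, which is exactly the content of your three intertwining relations (the third being Lemma \ref{lemma4.5} a) with $\alpha=\one_k$, valid for the non-additive set map $s_{\pi_G}\circ\varphi$ since only values on the group basis enter, and the first two holding because right multiplication by $h_{n}^{-1}\cdots h_1^{-1}$ permutes the basis while $s_{\pi_G},r_{\iota_G}$ act coefficientwise). The only step you should make explicit, and which the paper carries out inside its single computation, is the use of $h_1\in C_G(x)$ to identify the conjugated coefficient $a_{0,\varphi,h_1^{-1}xh_1}$ with $a_{0,\varphi,x}$, matching the trivial $C_G(x)$-action on $k_jx$.
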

\section{Reminder on Hochschild cohomology and symmetric cohomology of groups}\label{sec2}
\begin{subsec}{\textit{The Hochschild cohomology.}}\label{subsec21}

With the above notations we know that Hochschild cohomology is
\[\HH^n(k_{\alpha}G,M)=\mathrm{H}^n(C^*(k_{\alpha}G,M))=\Ker  \delta^n/\Ima \delta^{n-1}, n>0\]
where
\[C^*(k_{\alpha}G,M)=\{C^n(k_{\alpha}G,M)\}_{n\geq 0} \mbox{ and } C^n(k_{\alpha}G,M)=\Hom_{k}((k_{\alpha}G)^{\otimes n},M), n>0.\]
If $n=0$ then $C^0(k_{\alpha}G,M)$ is identified with $M$.
The cochain complex $C^*(k_{\alpha}G,M)$ is given by:
\begin{equation}\xymatrix{&C^0(k_{\alpha}G,M)\ar[r]^{\delta^0}&\ldots \ar[r]&C^{n-1}(k_{\alpha}G,M)\ar[r]^{\delta^{n-1}}\ar[r]&C^n(k_{\alpha}G,M)\ar[r]^{\delta^n}\ar[r]&C^{n+1}(k_{\alpha}G,M)\ldots}
\end{equation}
\[ \delta^n:C^n(k_{\alpha}G,M)\to C^{n+1}(k_{\alpha}G,M)\]
\[(\varphi:(k_{\alpha}G)^{\otimes n}\to M )\mapsto (\delta^n(\varphi):(k_{\alpha}G)^{\otimes (n+1)}\to M)\]
\begin{align*}
\delta^n(\varphi)(a_1\otimes\ldots \otimes a_{n+1})&=a_1\cdot \varphi(a_2\otimes \ldots \otimes a_{n+1})+
\sum_{j=1}^n(-1)^j \varphi(a_1\otimes\ldots \otimes a_j\cdot a_{j+1}\otimes \ldots \otimes a_{n+1})\\
&+(-1)^{n+1} \varphi (a_1\otimes\ldots a_n)\cdot a_{n+1},
\end{align*}
for all $a_1, \ldots, a_{n+1}\in k_{\alpha}G$, if $n>0$.

In the case $n=0$ the differential $\delta_0:M\to C^{1}(k_{\alpha}G,M)$ is given by
$\delta_0(m)(a_1)=a_1\cdot m-m\cdot a_1$ for any $m\in M, a_1\in k_{\alpha}G$. It follows that
$$\HH^0(k_{\alpha}G,M)\cong\{m\in M | am=ma\  \text{for all}\ a\in A \}.$$
\end{subsec}

\begin{remark}\label{rem}
 In general, the following notations are frequently used:
 \begin{align*}
  \delta^n(\varphi)&=\sum_{j=0}^{n+1}(-1)^jd_j(\varphi),\\
  d_0(\varphi)(a_1\otimes \ldots \otimes a_{n+1})&=a_1\cdot \varphi(a_2\otimes \ldots\otimes a_{n+1});\\
  d_j(\varphi)(a_1\otimes \ldots \otimes a_{n+1})&=\varphi(a_1\otimes \otimes \ldots\otimes a_j\cdot a_{j+1}\otimes \ldots \otimes a_{n+1});\\
  d_{n+1}(\varphi)(a_1\otimes \ldots \otimes a_{n+1})&= \varphi(a_1\otimes \ldots\otimes a_{n})\cdot a_{n+1};
 \end{align*}
 for all $j\in \{1,\ldots, n\},$ $\varphi\in C^n(k_{\alpha}G,M)$ and $a_1, \ldots a_{n+1}\in k_{\alpha}G.$
\end{remark}

\begin{subsec}{\textit{ The symmetric cohomology of groups.}}

Let $n\in \Z,$ $n> 0.$ For any group $G$ and any $kG$-module $A,$ we recall the classical $n$-th cohomology of $G$ with coefficients in $A.$ Explicitly
\[\mathrm{H}^n(G,A)=\mathrm{H}^n(C^*(G,A))=\Ker \partial^n/\Ima \partial^{n-1},\]
where \[C^*(G,A)=\{C^n(G,A)\}_{n\geq  0} \mbox{ and } C^n(G,A)=Maps(G^{\times n},A).\]
The cochain complex $C^*(G,A)$ is given by
\begin{equation}\xymatrix{&C^0(G,A)\ar[r]^{\partial^0}&C^1(G,A)\ldots \ar[r]&C^{n-1}(G,A)\ar[r]^{\partial^{n-1}}\ar[r]&C^n(G,A)\ar[r]^{\partial^n}
\ar[r]&C^{n+1}(G,A)\ldots }
\end{equation}
\[ \partial^n:C^n(G,A)\to C^{n+1}(G,A)\]
\[(\varphi:G^{\times n}\to A )\mapsto (\partial^n(\varphi):G^{\times (n+1)}\to A)\]
\begin{align*}
\partial^n(\varphi)(g_1,\ldots , g_{n+1})&=g_1 \varphi(g_2, \ldots , g_{n+1})+
\sum_{j=1}^n(-1)^j \varphi(g_1, \ldots , g_jg_{j+1}, \ldots, g_{n+1})\\
&+(-1)^{n+1}\varphi (g_1,\ldots, g_n),
\end{align*}
for all $g_1, \ldots, g_{n+1}\in G.$

If $n=0$ then $C^0(G,A)=A$ and $\partial^0:C^0(G,A)\rightarrow C^1(G,A)$ is given by $\partial^0(a)(g)=ga-a$ for any $a\in A,g\in G$; thus $\mathrm{H}^0(G,A)=A^G$, the subgroup of $G$-invariants elements of $A$.

  In \cite[Section 5]{St1}  Staic defined an action of the symmetric group $\Sigma_{n+1}$ on $C^n(G,A)$ which is compatible with the differential $\partial^*;$ see \cite[Proposition 5.1]{St1}. The  subcomplex of $\Sigma_{n+1}$-invariant elements, denoted
  \[CS^n(G,A)=(C^n(G,A))^{\Sigma_{n+1}},\] is called the symmetric cochain complex. Its homology, denoted $\HS^n(G,A),$ is called the symmetric cohomology of $G$ with coefficients in $A;$ see \cite[Definition 5.2]{St1}.
\end{subsec}

\section{The symmetric Hochschild cochain complex of twisted group algebras}\label{sec3}

  Since $\alpha:G\times G\to k^{\times}$ is a $2$-cocycle, it is known that
  \begin{equation}\label{eq31}
  \alpha(x,yz)\alpha(y,z)=\alpha(xy,z)\alpha(x,y),
  \end{equation}
  for all $x,y,z\in G.$ In particular
  \begin{equation}\label{eq32}
  \alpha(x,x^{-1})=\alpha(x^{-1},x),\quad \alpha(x,1)=\alpha(1,x)=1_k
  \end{equation}
 and
 \begin{equation}\label{eq33}
 (\bar{g})^{-1}=(\alpha(g,g^{-1}))^{-1}\ \overline{g^{-1}}
 \end{equation}
for all $g\in G.$

\begin{subsec} \textit{An action of $\Sigma_{n+1}$ on $C^n(k_{\alpha}G,M)$.}

Recall the Coxeter presentation of the symmetric group on $n+1$ letters, $\Sigma_{n+1}:$
\begin{align*}\Sigma_{n+1}=\langle \sigma_i, i\in \{1,\ldots, n\}\mid &\sigma_i^2=e \mbox{ for } i\in \{1,\ldots n\},\\ &\sigma_i\sigma_{i+1}\sigma_i=\sigma_{i+1}\sigma_i\sigma_{i+1} \mbox{ for } i\in \{1,\ldots, n-1\}, \\ &\sigma_i\sigma_j=\sigma_j\sigma_i \mbox{ for } \mid i-j\mid \geq 2\rangle.\end{align*}
We define an action of $\Sigma_{n+1}$ on $C^n(k_{\alpha}G,M)$ by describing the action of the transpositions $\sigma_i=(i,i+1),$ $1\leq i\leq n.$

  Let $\varphi\in C^n(k_{\alpha}G,M).$ For each $i$ we define $\sigma_i\varphi\in C^n(k_{\alpha}G,M)$ using its restriction on the $k$-basis $\mathcal{B}^{\otimes n}$ of $(k_{\alpha}G)^{\otimes n}$ and, after this, we extend by $k$-linearity:

  \begin{equation} \label{action_on_chain}
  \begin{split}
  (\sigma_1\varphi)(\overline{g_1}\otimes\ldots \otimes \overline{g_n})&=-\overline{g}_1\cdot \varphi(\overline{g_1}^{-1}\otimes \overline{g_1}\cdot \overline{g_2}\otimes \ldots \otimes \overline{g_n}),\\
  (\sigma_i\varphi)(\overline{g_1}\otimes\ldots \otimes \overline{g_n})&=-\varphi(\overline{g_1}\otimes \ldots \otimes \overline{g_{i-2}}\otimes \overline{g_{i-1}}\cdot \overline{g_i}\otimes \overline{g_i}^{-1}\otimes \overline{g_i}\cdot \overline{g_{i+1}}\otimes \overline{g_{i+2}}\otimes \ldots\otimes \overline{g_n}) \mbox{ for } 1<i<n, \\
  (\sigma_n\varphi)(\overline{g_1}\otimes\ldots \otimes \overline{g_n})&=-\varphi(\overline{g_1}\otimes \overline{g_2}\otimes \ldots \otimes \overline{g_{n-1}}\cdot \overline{g_n}\otimes \overline{g_n}^{{}^{-1}})\cdot \overline{g_n},
  \end{split}
  \end{equation}
  for any $\overline{g_1},\ldots , \overline{g_n}\in \mathcal{B}.$
\end{subsec}

\begin{proposition}\label{prop32}
The above relations determine an action of $\Sigma_{n+1}$ on $C^n(k_{\alpha}G,M),$ which is compatible with the differential $\delta^*.$
\end{proposition}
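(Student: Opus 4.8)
The statement has two parts: first, that the formulas (\ref{action_on_chain}) for the transpositions $\sigma_i$ actually extend to a well-defined action of the whole group $\Sigma_{n+1}$, and second, that this action commutes with (more precisely, is compatible with) the Hochschild differential $\delta^*$. Since $\Sigma_{n+1}$ is given by the Coxeter presentation, to establish the first part it suffices to verify that the operators defined by (\ref{action_on_chain}) satisfy the three families of Coxeter relations: the involution relations $\sigma_i^2 = e$, the braid relations $\sigma_i\sigma_{i+1}\sigma_i = \sigma_{i+1}\sigma_i\sigma_{i+1}$, and the commutation relations $\sigma_i\sigma_j = \sigma_j\sigma_i$ for $|i-j|\geq 2$.

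The plan for the first part is therefore to check these relations by direct computation on the $k$-basis $\mathcal{B}^{\otimes n}$, extending by $k$-linearity afterwards. The key algebraic tool here is the cocycle identity (\ref{eq31}) together with its consequences (\ref{eq32}) and (\ref{eq33}); every time two basis elements are multiplied or an inverse $\overline{g_i}^{-1}$ is produced, a factor $\alpha(\cdot,\cdot)$ appears, and one must track that the scalars $\alpha$ accumulated on the two sides of each relation agree. For $\sigma_i^2 = e$ the expected mechanism is that applying $\sigma_i$ twice reinserts the multiplied-then-inverted entries $\overline{g_{i-1}}\cdot\overline{g_i}$, $\overline{g_i}^{-1}$ in a way that telescopes back to the original tensor, with the two sign changes $(-1)(-1)=1$ and the $\alpha$-factors canceling via (\ref{eq32}). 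The commutation relations for $|i-j|\geq 2$ should be essentially automatic, since the transpositions $\sigma_i$ and $\sigma_j$ act on disjoint groups of tensor slots. I expect the braid relation to be the most delicate, as both sides involve three successive rearrangements of three adjacent slots, and one must confirm that the reshuffling of the $\overline{g}$'s and the bookkeeping of the $\alpha$-scalars coincide; this is where the full strength of (\ref{eq31}) is needed.

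For the second part, compatibility with the differential, the plan is to show $\sigma_i \circ \delta^n = \pm\, \delta^n \circ \sigma_i$ on the generators (or, in the sign convention actually making the invariants a subcomplex, that $\delta^n$ maps $\Sigma_{n+1}$-invariants to $\Sigma_{n+2}$-invariants). Using the notation $\delta^n = \sum_{j=0}^{n+1}(-1)^j d_j$ from Remark \ref{rem}, I would compute how each face operator $d_j$ interacts with the transposition action, matching the face $d_j$ on the source to a face on the target. This is the twisted-group-algebra analogue of Staic's computation in \cite[Proposition 5.1]{St1}; the extra ingredient beyond Staic's group-cohomology argument is again the systematic appearance of the $\alpha$-scalars, which must be verified to cancel consistently across the multiplications hidden inside each $d_j$ and inside the definition of $\sigma_i$.

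The main obstacle, as indicated, is the combinatorial and scalar bookkeeping in the braid relation and in the $\delta$-compatibility: both require carefully expanding compositions of maps that each multiply adjacent tensor factors and insert inverses, and then invoking the $2$-cocycle identity (\ref{eq31}) repeatedly to see that the products of $\alpha$-values collapse to the same scalar on both sides. Since the formulas are modeled precisely so that these cancellations occur, the verification is expected to be lengthy but mechanical once the right instances of (\ref{eq31})--(\ref{eq33}) are identified.
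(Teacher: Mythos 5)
Your plan follows essentially the same route as the paper's proof: the paper verifies the Coxeter relations (involution, braid, and commutation) by direct computation on the basis $\mathcal{B}^{\otimes n}$ using the cocycle identities (\ref{eq31})--(\ref{eq33}), and establishes compatibility with $\delta^*$ by showing invariants map to invariants via the same $\sigma_i$--$d_j$ interaction formulas from Staic's \cite[Proposition 5.1]{St1}, with the extra cases $i=n+1$ or $j=n+1$ checked separately. Your parenthetical hedge is the right one --- the transpositions do not commute with $\delta^n$ up to a single global sign; rather the relations $\sigma_i d_i = -d_{i-1}$ and $\sigma_i d_{i-1} = -d_i$ combine so that $\delta^n$ carries $\Sigma_{n+1}$-invariants into $\Sigma_{n+2}$-invariants, which is exactly how the paper argues.
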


\begin{proof}
Consider $\overline{g_1}, \ldots ,\overline{g_n}\in \mathcal{B}$ and $\varphi:(k_{\alpha}G)^{\otimes n}\to M$ a $k$-linear map.
We start with the square identity relations.
  \begin{align*}
   &(\sigma_1(\sigma_1 \varphi))(\overline{g_1}\otimes \ldots \otimes \overline{g_n})\\
   &=-\overline{g_1}\cdot (\sigma_1\varphi)(\overline{g_1}^{-1}\otimes \overline{g_1}\cdot \overline{g_2}\otimes \ldots \otimes \overline{g_n})\\
   &=-\alpha(g_1,g_1^{-1})^{-1}\alpha(g_1,g_2)\ \overline{g_1}\cdot (\sigma_1\varphi)(\overline{g_1^{-1}}\otimes \overline{g_1g_2}\otimes \ldots \otimes \overline{g_n})\\
   &=\alpha(g_1,g_2)\alpha(g_1,g_1^{-1})^{-1}\alpha(g_1,g_1^{-1})\alpha(g_1^{-1},g_1)^{-1}\alpha(g_1^{-1},g_1g_2)\varphi(\overline{g_1}\otimes \overline{g_2}\otimes \ldots \otimes \overline{g_n})\\
   &=\alpha(g_1^{-1},g_1g_2)\alpha(g_1,g_2)\alpha(g_1^{-1},g_1)^{-1}\varphi(\overline{g_1}\otimes \overline{g_2}\otimes \ldots \otimes \overline{g_n})\\
   &=\alpha(g_1^{-1}g_1,g_2)\alpha(g_1^{-1},g_1)\alpha(g_1^{-1},g_1)^{-1}\varphi(\overline{g_1}\otimes \overline{g_2}\otimes \ldots \otimes \overline{g_n})\\
   &=\varphi(\overline{g_1}\otimes \overline{g_2}\otimes \ldots \otimes \overline{g_n}).
  \end{align*}
Let $1<i<n,$ then
\begin{align*}
&(\sigma_i(\sigma_i \varphi))(\overline{g_1}\otimes \ldots \otimes \overline{g_n})\\
&=-(\sigma_i\varphi)(\overline{g_1}\otimes \ldots \otimes \overline{g_{i-2}}\otimes \overline{g_{i-1}}\cdot \overline{g_i}\otimes \overline{g_i}^{-1}\otimes \overline{g_i}\cdot \overline{g_{i+1}}\otimes \overline{g_{i+2}}\otimes \ldots\otimes \overline{g_n})\\
&=-\alpha(g_{i-1},g_i)\alpha(g_i,g_i^{-1})^{-1}\alpha(g_i,g_{i+1})(\sigma_i\varphi)\ (\overline{g_1}\otimes \ldots \otimes \overline{g_{i-2}}\otimes \overline{g_{i-1}g_i}\otimes \overline{g_i^{-1}}\otimes \overline{g_ig_{i+1}}\otimes \overline{g_{i+2}}\otimes \ldots\otimes \overline{g_n})\\
&=\alpha(g_{i-1},g_i)\alpha(g_{i-1}g_i,g_i^{-1})\alpha(g_i,g_{i+1})\alpha(g_i^{-1},g_ig_{i+1})\alpha(g_i,g_i^{-1})^{-1}\alpha(g_i^{-1},g_i)^{-1}
\varphi(\overline{g_1}\otimes \ldots  \otimes \overline{g_i}\otimes   \ldots\otimes \overline{g_n})\\
&=\alpha(g_{i-1},1)\alpha(g_i,g_i^{-1})\alpha(g_i^{-1},g_i)\alpha(1,g_{i+1})\alpha(g_i,g_i^{-1})^{-1}\alpha(g_i^{-1},g_i)^{-1}
\varphi(\overline{g_1}\otimes \ldots  \otimes \overline{g_i}\otimes   \ldots\otimes \overline{g_n})\\
&=\varphi(\overline{g_1}\otimes \ldots  \otimes \overline{g_i}\otimes   \ldots\otimes \overline{g_n}).
\end{align*}
Next we have
\begin{align*}
&(\sigma_n(\sigma_n\varphi))(\overline{g_1}\otimes \ldots    \ldots\otimes \overline{g_n})\\
&=-(\sigma_n\varphi)(\overline{g_1}\otimes \ldots  \otimes \overline{g_{n-1}}\cdot \overline{g_n}\otimes \overline{g_n}^{^{-1}})\cdot \overline{g_n}\\
&=-\alpha(g_{n-1},g_n)\alpha(g_n,g_n^{-1})^{-1}(\sigma_n\varphi)(\overline{g_1}\otimes \ldots  \otimes \overline{g_{n-1} g_n}\otimes \overline{g_n^{-1}})\cdot \overline{g_n}\\
&=\alpha(g_{n-1},g_n)\alpha(g_{n-1}g_n,g_n^{-1})\alpha(g_n,g_n^{-1})^{-1}\alpha(g_n^{-1},g_n)^{-1}\varphi(\overline{g_1}\otimes \ldots  \otimes \overline{g_{n-1} }\otimes \overline{g_n})\cdot \overline{g_n^{-1}}\cdot\overline{g_n}\\
&=\varphi(\overline{g_1}\otimes \ldots  \otimes \overline{g_{n-1} }\otimes \overline{g_n}).
\end{align*}
We continue with the cubic relations, that is $$\sigma_i(\sigma_{i+1}(\sigma_i\varphi))=\sigma_{i+1}(\sigma_{i}(\sigma_{i+1}\varphi)),$$ for $i\in \{1\ldots n-1\}.$ We only verify the case $i=n-1$, the other are similar,

\begin{align*}
&\sigma_{n-1}(\sigma_n(\sigma_{n-1}\varphi))(\overline{g_1}\otimes\ldots \overline{g_n})=-\sigma_{n}(\sigma_{n-1}\varphi)(\overline{g_1}\ldots\otimes\overline{g_{n-2}}\cdot \overline{g_{n-1}} \otimes \overline{g_{n-1}}^{^{-1}}\otimes\overline{g_{n-1}}\cdot \overline{g_n})\\
&=\alpha(g_{n-2},g_{n-1})\alpha(g_{n-1},g_{n-1}^{-1})^{-1}\alpha(g_{n-1},g_n),\\&(\sigma_{n-1}\varphi)
(\overline{g_1}\ldots\otimes\overline{g_{n-2}g_{n-1}} \otimes \overline{g_{n-1}^{-1}}\cdot \overline{g_{n-1}g_n} \otimes\overline{g_{n-1}g_n}^{^{-1}})\cdot \overline{g_{n-1}g_n}\\
&=\alpha(g_{n-2},g_{n-1})\alpha(g^{-1}_{n-1},g_{n-1}g_n)\alpha(g_{n-1},g_n)\alpha(g_{n-1},g_{n-1}^{-1})^{-1}\alpha(g_{n-1}g_{n},g_{n}^{-1}g_{n-1}^{-1})^{-1},\\
&(\sigma_{n-1}\varphi)(\overline{g_1}\otimes\ldots \otimes \overline{g_{n-2}g_{n-1}}\otimes\overline{g_n}\otimes\overline{g_{n}^{-1}g_{n-1}^{-1}})\cdot \overline{g_{n-1}g_n}\\
&=-\alpha(g_{n-2},g_{n-1})\alpha(g^{-1}_{n-1},g_{n-1})\alpha(1,g_n)\alpha(g_{n-1},g_{n-1}^{-1})^{-1}\alpha(g_{n-1}g_n,g_{n}^{-1}g_{n-1}^{-1})^{-1},\\
&\varphi(\overline{g_1}\otimes\ldots \otimes \overline{g_{n-2}g_{n-1}}\cdot \overline{g_n}\otimes\overline{g_n}^{^{-1}}\otimes\overline{g_n}\cdot \overline{g_{n}^{-1}g_{n-1}^{-1}})\cdot \overline{g_{n-1}g_n}\\
&=-\alpha(g_{n-2},g_{n-1})\alpha(g_{n-2}g_{n-1},g_n)\alpha(g_n,g_{n}^{-1}g_{n-1}^{-1})\alpha(g_n,g_n^{-1})^{-1}\alpha(g_{n-1}g_n,g_n^{-1}g_{n-1}^{-1})^{-1},\\
&\varphi(\overline{g_1}\otimes\ldots\otimes \overline{g_{n-2}g_{n-1}g_n}\otimes\overline{g_n^{-1}}\otimes \overline{g_{n-1}^{-1}})\cdot \overline{g_{n-1}g_n}\\
&=-\alpha(g_{n-1},g_n)\alpha(g_{n-2},g_{n-1}g_n)\alpha(g_n,g_n^{-1})^{-1}\alpha(g_{n-1}g_n,g_n^{-1}g_{n-1}^{-1})^{-1}
\alpha(g_n,g_n^{-1}g_{n-1}^{-1}),\\
&\varphi(\overline{g_1}\otimes\ldots \otimes \overline{g_{n-2}g_{n-1}g_n}\otimes \overline{g_n^{-1}}\otimes \overline{g_{n-1}^{-1}})\cdot \overline{g_{n-1}g_n}\\
&=-A\alpha(g_n,g_n^{-1}g_{n-1}^{-1}) \alpha(g_n^{-1},g_{n-1}^{-1})\alpha(g_n^{-1},g_{n-1}^{-1})^{-1}\varphi(\overline{g_1}\otimes\ldots \otimes \overline{g_{n-2}g_{n-1}g_n}\otimes \overline{g_n^{-1}}\otimes \overline{g_{n-1}^{-1}})\cdot \overline{g_{n-1}g_n}\\
&=-\alpha(g_{n-1},g_n)\alpha(g_{n-2},g_{n-1}g_n)\alpha(g_{n-1}g_n,g_n^{-1}g_{n-1}^{-1})^{-1}\alpha(g_n^{-1},g_{n-1}^{-1})^{-1}\\
&\quad\times\varphi(\overline{g_1}\otimes\ldots \otimes \overline{g_{n-2}g_{n-1}g_n}\otimes \overline{g_n^{-1}}\otimes \overline{g_{n-1}^{-1}})\cdot \overline{g_{n-1}g_n};
\end{align*}
where, in the sixth equality we used the notation $$A=\alpha(g_{n-1},g_n)\alpha(g_{n-2},g_{n-1}g_n)\alpha(g_n,g_n^{-1})^{-1}\alpha(g_{n-1}g_n,g_n^{-1}g_{n-1}^{-1})^{-1}.$$
On the other hand we have
\begin{align*}
&\sigma_n(\sigma_{n-1}(\sigma_n\varphi))(\overline{g_1}\otimes \ldots \otimes \overline{g_n})=-\sigma_{n-1}(\sigma_n\varphi)(\overline{g_1}\otimes\ldots\otimes \overline{g_{n-1}}\cdot \overline{g_n}\otimes \overline{g_n}^{^{-1}})\cdot \overline{g_n}\\
&=-\alpha(g_{n-1},g_n)\alpha(g_{n-2},g_{n-1}g_n)\alpha(g_{n-1}g_n,g_n^{-1}g_{n-1}^{-1})^{-1}\alpha(g_n,g_n^{-1})^{-1}
\alpha(g_{n-1}g_n,g_n^{-1})\alpha(g_n^{-1}g_{n-1}^{-1},g_{n-1}),\\
&\alpha(g_{n-1},g_{n-1}^{-1})^{-1}\alpha(g_{n-1},g_n)\varphi(\overline{g_1}\otimes\ldots \otimes \overline{g_{n-2}g_{n-1}g_n}\otimes \overline{g_n^{-1}}\otimes \overline{g_{n-1}^{-1}})\cdot \overline{g_{n-1}g_n}\\
&=-\alpha(g_{n-1},g_n)\alpha(g_{n-2},g_{n-1}g_n)\alpha(g_{n-1}g_n,g_n^{-1}g_{n-1}^{-1})^{-1}\alpha(g_n^{-1},g_{n-1}^{-1})^{-1}\\
&\quad \times\varphi(\overline{g_1}\otimes\ldots \otimes \overline{g_{n-2}g_{n-1}g_n}\otimes \overline{g_n^{-1}}\otimes \overline{g_{n-1}^{-1}})\cdot \overline{g_{n-1}g_n}.
\end{align*}
The relations $\sigma_i(\sigma_j\varphi)=\sigma_j(\sigma_i\varphi)$ for $\mid i-j\mid \geq 2$ are easily checked.
In most of the above equalities we applied heavily the identities (\ref{eq31}), (\ref{eq32}) and (\ref{eq33}).

We also have to verify that the above action is compatible with $\delta^*,$ that is
\[\delta^{n}(\varphi)\in \left(C^{n+1}(k_{\alpha}G,M)\right)^{\Sigma_{n+2}} \mbox{ if } \varphi\in \left(C^{n}(k_{\alpha}G,M)\right)^{\Sigma_{n+1}}.\]
Recall that $\delta^{n}=\sum_{j=0}^{n+1}(-1)^jd_j.$ The same formulas as in \cite[Proposition 5.1]{St1} hold:
\begin{align*}
&\sigma_i(d_j(\varphi))=\begin{cases}d_j(\sigma_i\varphi), \mbox{ if } i<j\\
d_j(\sigma_{i-1}\varphi), \mbox{ if } i\geq j+2\end{cases}\\
& \mbox{  If } i=j \mbox{  we have } (i,i+1)d_i(\varphi)=\sigma_i(d_i(\varphi))=-d_{i-1}(\varphi)\\
& \mbox{  If } i=j+1 \mbox{ we have } \sigma_i(d_{i-1}(\varphi))=-d_i(\varphi).
\end{align*}
The only formal differences with respect to symmetric group cohomology case are  for $i=n+1$ (see the last formula of \ref{action_on_chain}) or $j=n+1$ (see $d_{n+1}$ in Remark \ref{rem}). We exemplify with some of these situations.

Set $i=n+1$ and $j=n+1,$ then
\begin{align*}
&\sigma_{n+1}(d_{n+1}(\varphi))(\overline{g_1}\otimes \ldots \otimes \overline{g_{n+1}})\\
&=-d_{n+1}(\varphi)(\overline{g_1}\otimes \ldots \otimes \overline{g_n}\cdot \overline{g_{n+1}}\otimes \overline{g_{n+1}}^{^{-1}})\cdot \overline{g_{n+1}}\\
&=-\varphi(\overline{g_1}\otimes \ldots \otimes \overline{g_n}\cdot \overline{g_{n+1}}) \overline{g_{n+1}}^{^{-1}}\cdot \overline{g_{n+1}}\\
&=-d_n(\varphi)(\overline{g_1}\otimes\ldots\otimes\overline{g_{n+1}}).
\end{align*}

Let $i=n+1$ and $j=n,$ then
\begin{align*}
&\sigma_{n+1}(d_{n}(\varphi))(\overline{g_1}\otimes \ldots \otimes \overline{g_{n+1}})\\
&=-d_{n}(\varphi)(\overline{g_1}\otimes \ldots \otimes \overline{g_n}\cdot \overline{g_{n+1}}\otimes \overline{g_{n+1}}^{^{-1}})\cdot \overline{g_{n+1}}\\
&=-\varphi(\overline{g_1}\otimes \ldots \otimes \overline{g_{n-1}}\otimes \overline{g_n})\cdot \overline{g_{n+1}}\\
&=-d_{n+1}(\varphi)(\overline{g_1}\otimes \ldots \otimes \overline{g_{n-1}}\otimes \overline{g_n}\otimes \overline{g_{n+1}}).
\end{align*}
Finally, if we choose $i=n$ and $j=n+1,$ then
\begin{align*}
&\sigma_n(d_{n+1}(\varphi))(\overline{g_1}\otimes \ldots \otimes \overline{g_{n}}\otimes \overline{g_{n+1}})\\
&=-d_{n+1}(\varphi)(\overline{g_1}\otimes \ldots \otimes \overline{g_{n-1}}\cdot \overline{g_n}\otimes \overline{g_n}^{^{-1}}\otimes \overline{g_n}\cdot \overline{g_{n+1}})\\
&=-\varphi(\overline{g_1}\otimes \ldots \otimes \overline{g_{n-1}}\cdot \overline{g_n}\otimes \overline{g_n}^{^{-1}})\cdot\overline{g_n}\cdot \overline{g_{n+1}}\\
&=(\sigma_n \varphi)(\overline{g_1}\otimes \ldots \otimes \overline{g_n})\cdot \overline{g_{n+1}}\\
&=d_{n+1}(\sigma_n\varphi)(\overline{g_1}\otimes \ldots \otimes \overline{g_{n}}\otimes \overline{g_{n+1}}).
\end{align*}
\end{proof}

\begin{definition}\label{defn33}
The symmetric Hochschild cohomology of $k_{\alpha}G$ with coefficients in $M,$ denoted $$\HHS^n(k_{\alpha}G,M),$$ is the homology of $CS^n(k_{\alpha}G,M)=\left(C^n(k_{\alpha}G, M)\right)^{\Sigma_{n+1}},$ the subcomplex of $\Sigma_{n+1}$-invariants of $C^n(k_{\alpha}G, M).$
\end{definition}
\section{Additive centralizers decomposition of Hochschild cohomology of twisted group algebras}\label{sec4}
We continue with the notations from Introduction. We define the $k$-linear maps between cohomology spaces which allow to describe (\ref{eq11}) and  (\ref{eq12}) explicitly in terms of cochain maps.
This approach is similar to that of \cite[Remark 5.2, b)]{LiZh} or \cite[Theorem 6.3]{LiZh}, presented for group algebras.

\begin{subsec}\label{prima'}
 Let $x\in X.$ We define the $k$-linear maps $\pi^*_{G,x,\alpha}$ between cochain complexes as
 \begin{align*}&\pi^n_{G,x,\alpha}:C^n(k_{\alpha}G,k_{\alpha}G)\to C^n(C_G(x),k\bar{x})\\
   &(\varphi:(k_{\alpha}G)^{\otimes n}\to k_{\alpha}G)\mapsto (\pi^n_{G,x,\alpha}(\varphi):(C_G(x))^{\times n}\to k\bar{x})\\
   & \pi^n_{G,x,\alpha}(\varphi)(h_1,h_2,\ldots, h_n)=a_{1,x}\bar{x}, \mbox{ for all } h_1, h_2, \ldots, h_n\in C_G(x),
 \end{align*}
 where $a_{1,x}$ is the coefficient of $\bar{x}$ in $\varphi(\overline{h_1}\otimes \overline{h_2}\otimes \ldots \otimes \overline{h_n})\cdot \overline{h_n}^{^{-1}}\cdot \overline{h_{n-1}}^{^{-1}}\cdot \ldots \cdot \overline{h_1}^{^{-1}}.$
\end{subsec}

\begin{subsec}\label{prima''}
 Let $x\in Z(G).$ We define the $k$-linear maps $\nu_{G,x,\alpha}^*$ between cochain complexes as
 \begin{align*}
 &\nu^n_{G,x,\alpha}:C^n(G,k\bar{x})\to C^n(k_{\alpha}G,k_{\alpha}G)\\
 &(\psi:G^{\times n}\to k\bar{x})\mapsto (\nu^n_{G,x,\alpha}(\psi):\mathcal{B}^{\otimes n}\to k_{\alpha}G)\\
 &\nu^n_{G,x,\alpha}(\psi)(\overline{g_1}\otimes \overline{g_2}\otimes \ldots \otimes \overline{g_n})=\psi(g_1,\ldots, g_n)\cdot \overline{g_1}\cdot \ldots \cdot \overline{g_n}, \mbox{  for all }  g_1, \ldots, g_n\in G
 \end{align*}
\end{subsec}

\begin{subsec}\label{trivial_def} \label{5.3}
 Let $x\in X$ and $\alpha=\one_k$. In this case we identify $\mathcal{B}$ with $G$ and renounce  the ''overline'' notation. Recall that $\nu_{G,x,\alpha}^*$ is denoted $\nu_{G,x}^*$ and, we define
\[\nu^n_{G,x}:C^n(G,kx)\to C^n(kG,kG) \mbox{ by }\]
\[\nu^n_{G,x}(\psi)(g_1\otimes \ldots \otimes g_n)=\sum_{j=1}^{n_x}\psi(h_{j,1},\ldots, h_{j,n})x^{-1}x_jg_1\ldots g_n\]
for all $\psi\in C^n(C_G(x),kx)$ and $g_1,\ldots, g_n\in G.$ Here $h_{j,1}, \ldots h_{j,n}\in C_G(x)$ are determined by the sequence $\{g_1,\ldots, g_n\}$ and the elements $x_j,$ with $j\in \{1,\ldots, n_x\},$ are given as we explain below.

Let $C_x=\{gxg^{-1}\mid g\in G\}$. We choose a right coset decomposition of $C_G(x)$ in $G$
\[G=C_G(x)\gamma_{1,x}\cup\ldots \cup C_G(x)\gamma_{n_x,x}=\gamma_{1,x}^{-1}C_G(x)\cup\ldots\cup\gamma_{n_x,x}^{-1}C_G(x)
\] such that

\[C_x=\{x, \gamma_{2,x}^{-1}\ x\ \gamma_{2,x},\ldots, \gamma_{n_x,x}^{-1}\ x\ \gamma_{n_x,x}\}.\]

Then we set $x_j:=\gamma_{j,x}^{-1}\ x\ \gamma_{j,x}$ for all $j\in \{1,\ldots n_{x}\}$ and $\gamma_{1,x}=1.$
Let $j\in \{1,\ldots, n_x\}.$ Since $\gamma_{j,x}g_1\in G$ it follows that there is a unique $s_j^1\in \{1,\ldots, n_x\}$ such that
\[\gamma_{j,x}g_1=h_{j,1}\gamma_{s_j^1,x},\]
with $h_{j,1}\in C_G(x).$ By the same arguments there are indexes (uniquely determined) $s_j^2,\ldots, s_j^n\in \{1,\ldots, n_x\}$ such that $h_{j,2},\ldots h_{j,n}\in C_G(x)$ are determined by the sequence $g_2,\ldots, g_n$ as follows

\[\gamma_{s_j^1,x}g_2=h_{j,2}\gamma_{s_j^2,x},\ \ldots, \ \gamma_{s_j^{n-1},x\ }g_n=h_{j,n}\gamma_{s_j^n,x}.\]

\end{subsec}
\begin{remark}
For a twisted group algebra,  if we attempt a similar definition to that given in \ref{trivial_def}, it will not be clear how to verify the commutativity of $\nu_{G,x,\alpha}^*$ with the differential. If $x\in Z(G)$
and $\alpha=\one_k$ then \ref{trivial_def} agrees with \ref{prima''}.
\end{remark}
\begin{lemma}\label{lemma4.5} Let $n\in\Z,n\geq 1$.The following hold:
\begin{itemize}
\item[$a)$] Let $x\in X.$ Then $\pi^*_{G,x,\alpha}$ commutes with the differentials. That is, the next diagram is commutative
    \[\xymatrix{ C^{n-1}(k_{\alpha}G, k_{\alpha}G)\ar[d]^{\pi^{n-1}_{G,x,\alpha}}\ar[r]^{\delta^{n-1}} &C^n(k_{\alpha}G,k_{\alpha}G)\ar[d]^{\pi^{n}_{G,x,\alpha}} \\
   C^{n-1}(C_G(x),k\bar{x})\ar[r]^{\partial^{n-1}}&C^{n}(C_G(x),k\bar{x})}\]

\item[$b)$] Let $x\in Z(G).$ Then $\nu^*_{G,x,\alpha}$ commutes with the differentials. That is, the next diagram is commutative
   \[ \xymatrix{  C^{n-1}(k_{\alpha}G, k_{\alpha}G)\ar[r]^{\delta^{n-1}} &C^n(k_{\alpha}G,k_{\alpha}G) \\
   C^{n-1}(G,k\bar{x})\ar[u]_{\nu^{n-1}_{G,x,\alpha}}\ar[r]^{\partial^{n-1}}&C^{n}(G,k\bar{x})\ar[u]_{\nu^{n}_{G,x,\alpha}}
    }\]

\item[$c)$]We have:\[ \left(\bigoplus_{x\in Z(G)}\pi^n_{G,x,\alpha}\right)\circ \left(\bigoplus_{x\in Z(G)}\nu^n_{G,x,\alpha}\right)=\bigoplus_{x\in Z(G)}\id_{C^n(G,k\bar{x})} \mbox{   }\]

\item[$d)$] Consider $x\in X$ and $\alpha=\one_k$. The following diagram is commutative
     \[ \xymatrix{  C^{n-1}(k_{}G, k_{}G)\ar[r]^{\delta^{n-1}} &C^n(k_{}G,k_{}G) \\
   C^{n-1}(C_G(x),kx)\ar[u]_{\nu^{n-1}_{G,x}}\ar[r]^{\partial^{n-1}}&C^{n}(C_G(x),kx)\ar[u]_{\nu^{n}_{G,x}}
    }\]
\item[$e)$] And
\[ \left(\bigoplus_{x\in X}\pi^n_{G,x}\right)\circ \left(\bigoplus_{x\in X}\nu^n_{G,x}\right)=\bigoplus_{x\in X}\id_{C^n(C_G(x),kx)},\] where $\pi^n_{G,x}$ stands for $\pi^n_{G,x,\one_k}.$
\end{itemize}
\end{lemma}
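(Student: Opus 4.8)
The plan is to dispatch the five items in two waves: the compatibility statements a), b), d) by a face-by-face comparison of the two composites, and the splitting identities c), e) as short consequences once the relevant cochain maps are understood. Throughout I would evaluate each cochain on a tuple of basis elements and expand both differentials in face-map form, $\delta^{n}=\sum_{j=0}^{n+1}(-1)^jd_j$ as in Remark~\ref{rem} and $\partial^{n}$ analogously. The one structural identity that drives a), b) and c) is that, for $h\in C_G(x)$, conjugation $w\mapsto \bar h\cdot w\cdot \bar h^{-1}$ on the line $k\bar x$ reproduces the $kC_G(x)$-module action $h\bar x=\alpha(h,x)\alpha(x,h)^{-1}\bar x$ from (\ref{eq11}). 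I would establish this once: a direct expansion gives $\bar h\cdot\bar x\cdot\bar h^{-1}=\alpha(h,x)\alpha(h,h^{-1})^{-1}\alpha(hx,h^{-1})\bar x$, and the cocycle identity (\ref{eq31}) applied to $(x,h,h^{-1})$ together with $\alpha(x,1)=1$ from (\ref{eq32}) yields $\alpha(hx,h^{-1})=\alpha(h,h^{-1})\alpha(x,h)^{-1}$, collapsing the scalar to $\alpha(h,x)\alpha(x,h)^{-1}$. Note also that conjugation by $\bar h$ sends every basis line $k\bar g$ with $g\neq x$ off the line $k\bar x$, since $hgh^{-1}=x$ forces $g=x$, so the $\bar x$-coefficient of $\bar h\cdot w\cdot\bar h^{-1}$ is the module image of the $\bar x$-coefficient of $w$.

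For a) I would evaluate $\pi^n_{G,x,\alpha}(\delta^{n-1}\varphi)$ and $\partial^{n-1}(\pi^{n-1}_{G,x,\alpha}\varphi)$ at a tuple $(h_1,\dots,h_n)\in C_G(x)^{\times n}$ and match faces. After right-multiplication by $\bar h_n^{-1}\cdots\bar h_1^{-1}$, the $d_0$-face $\bar h_1\cdot\varphi(\bar h_2\otimes\cdots)$ has $\bar x$-coefficient equal to $h_1\cdot$ applied to the $\bar x$-coefficient of $\varphi(\bar h_2\otimes\cdots)\bar h_n^{-1}\cdots\bar h_2^{-1}$, by the conjugation identity, which is exactly the leading face of $\partial^{n-1}(\pi^{n-1}_{G,x,\alpha}\varphi)$; each interior face matches after regrouping $\bar h_{j+1}^{-1}\bar h_j^{-1}=(\bar h_j\cdot\bar h_{j+1})^{-1}$ and using $\bar h_j\cdot\bar h_{j+1}=\alpha(h_j,h_{j+1})\overline{h_jh_{j+1}}$ with $h_jh_{j+1}\in C_G(x)$, the cocycle scalars cancelling; the $d_n$-face reduces directly to the last face of $\partial^{n-1}$. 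Statement b) is the mirror computation for $\nu^{n}_{G,x,\alpha}$ with $x\in Z(G)$: the governing identity is $\bar g_1\cdot v=(g_1 v)\cdot\bar g_1$ for $v\in k\bar x$ (again from the conjugation identity, now with $g_1x=xg_1$), while the interior and terminal faces match because merging $\bar g_j\cdot\bar g_{j+1}$ and applying $\nu$ produces exactly the cocycle factor already present in $\bar g_1\cdots\bar g_n$. Statement c) is then immediate: for $x,y\in Z(G)$ the element $\nu^n_{G,x,\alpha}(\psi)(\bar h_1\otimes\cdots\otimes\bar h_n)\cdot\bar h_n^{-1}\cdots\bar h_1^{-1}$ telescopes, using $\bar h\cdot\bar h^{-1}=1$ from (\ref{eq33}), to $\psi(h_1,\dots,h_n)\in k\bar x$, whose $\bar y$-coefficient is $\psi(h_1,\dots,h_n)$ if $y=x$ and $0$ if $y\neq x$, since distinct central elements are non-conjugate; summing over $Z(G)$ gives the claim.

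The genuinely laborious item is d), the compatibility of $\nu^n_{G,x}$ with the differentials in the coset form of \ref{trivial_def}. I would compare $\delta^{n}(\nu^{n}_{G,x}\psi)$ and $\nu^{n+1}_{G,x}(\partial^{n}\psi)$ on $(g_1,\dots,g_{n+1})$, the bookkeeping running through the recursion $\gamma_{s_j^{i-1},x}\,g_i=h_{j,i}\gamma_{s_j^i,x}$. The interior faces are benign: substituting $g_ig_{i+1}$ collapses two recursion steps, $\gamma\,g_ig_{i+1}=h_{j,i}h_{j,i+1}\gamma''$, so the $i$-th interior face of $\delta^{n}(\nu^{n}_{G,x}\psi)$ equals the $i$-th merge term of $\nu^{n+1}_{G,x}(\partial^{n}\psi)$; the terminal face matches directly because $h_{j,1},\dots,h_{j,n}$ do not depend on $g_{n+1}$. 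The main obstacle is the $d_0$-face: left multiplication by $g_1$ forces a re-indexing of the outer sum through the bijection $j\mapsto s_j^1$ of $\{1,\dots,n_x\}$, and I would reconcile the two sums using the conjugation relation $x_{s_j^1}=g_1^{-1}x_jg_1$ (a consequence of $\gamma_{j,x}g_1=h_{j,1}\gamma_{s_j^1,x}$ with $h_{j,1}\in C_G(x)$), which gives $g_1x_{s_j^1}=x_jg_1$ and makes the $g_1$-factor cross the conjugacy-class element correctly; the trivial $kx$-action removes the spurious $h_{j,1}\cdot$ coming from the leading face of $\partial^{n}$.

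Finally, for e) I would compute $\pi^n_{G,y}(\nu^n_{G,x}\psi)$ directly at $(h_1,\dots,h_n)\in C_G(y)^{\times n}$. Since each $\psi(h_{j,1},\dots,h_{j,n})=c_jx\in kx$, the group-element tails cancel and $\nu^n_{G,x}(\psi)(h_1\otimes\cdots\otimes h_n)\,h_n^{-1}\cdots h_1^{-1}=\sum_{j=1}^{n_x}c_j\,x_j$, where the $x_j=\gamma_{j,x}^{-1}x\gamma_{j,x}$ exhaust the conjugacy class $C_x$ without repetition. Extracting the coefficient of $y$: if $y\neq x$ in $X$ then $y\notin C_x$, so $\pi^n_{G,y}(\nu^n_{G,x}\psi)=0$; if $y=x$ then only $j=1$ contributes, as $\gamma_{1,x}=1$ gives $x_1=x$, and for that index the recursion with $h_i\in C_G(x)$ forces $h_{1,i}=h_i$ and $s_1^i=1$, whence $c_1x=\psi(h_1,\dots,h_n)$. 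Thus $\pi^n_{G,y}(\nu^n_{G,x}\psi)=\psi$ if $y=x$ and $0$ otherwise, and summing over $x,y\in X$ yields $\bigl(\bigoplus_{x\in X}\pi^n_{G,x}\bigr)\circ\bigl(\bigoplus_{x\in X}\nu^n_{G,x}\bigr)=\bigoplus_{x\in X}\id$.
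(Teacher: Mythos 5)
Your proposal is correct, and for parts a), b), c) and e) it follows essentially the same route as the paper: evaluate on tuples of basis elements, expand both differentials face by face, and match terms using the cocycle identities (\ref{eq31})--(\ref{eq33}). Two genuine differences are worth noting. First, for part d) the paper does not compute at all --- it simply invokes \cite[Theorem 6.3]{LiZh}, observing that $\nu^*_{G,x}$ agrees with the map considered there --- whereas you carry out the verification directly via the coset recursion of \ref{trivial_def}; your treatment of the $d_0$-face (reindexing by the bijection $j\mapsto s_j^1$ and the relation $x_{s_j^1}=g_1^{-1}x_jg_1$, with the trivial $kx$-action absorbing the leading $h_{j,1}$) is exactly the mechanism the paper later deploys in the proof of Proposition \ref{proposition5.1} c), so your self-contained argument is sound and arguably preferable to the citation. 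Second, you isolate once and for all the identity $\bar h\cdot\bar x\cdot\bar h^{-1}=\alpha(h,x)\alpha(x,h)^{-1}\,\bar x$ for $h\in C_G(x)$, together with the observation that conjugation moves every other basis line off $k\bar x$; the paper uses this only implicitly, inside the computation of $b^0_{1,x}$ in part a). Finally, you explicitly verify the off-diagonal vanishing $\pi^n_{G,y,\alpha}\circ\nu^n_{G,x,\alpha}=0$ for $y\neq x$ (and likewise $\pi^n_{G,y}\circ\nu^n_{G,x}=0$ for $y\neq x$ in $X$, since $y\notin C_x$), which is genuinely needed for the direct-sum identities in c) and e) but is left tacit in the paper, where only the diagonal compositions are computed. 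In short: same strategy, but more self-contained on d) and more careful about the block structure of the composites in c) and e).
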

\begin{proof}
$a)$ Let $h_1, \ldots, h_n\in C_G(x)$ and the $k$-linear map $\varphi:(k_{\alpha}G)^{\otimes(n-1)}\rightarrow k_{\alpha}G$. According to \ref{prima'} and to the definitions of $\delta^{n-1}$ and $\partial^{n-1}$ we obtain
\[\pi^n_{G,x,\alpha}(\delta^{n-1}(\varphi))(h_1,\ldots, h_n)=a_{1,x}\bar{x},\] where $a_{1,x}$ is the coefficient of $\bar{x}$ in \[ \delta^{n-1}(\varphi)(\overline{h_1}\otimes\ldots\otimes \overline{h_n})\cdot \overline{h_n}^{^{-1}}\cdot \ldots \cdot \overline{h_1}^{^{-1}} .\]
Consequently, $a_{1,x}$ is the coefficient of  $\bar{x}$ in

\begin{align*}&( \overline{h_1}\cdot \varphi(\overline{h_2}\otimes\ldots\otimes \overline{h_n})+\sum_{i=1}^{n-1}(-1)^i\varphi(\overline{h_1}\otimes\ldots\otimes\overline{h_{i-1}}\cdot \overline{h_i}\otimes \ldots \otimes \overline{h_n})\\
&\left.+(-1)^n\varphi(\overline{h_1}\otimes\ldots\otimes \overline{h_{n-1}})   \cdot \overline{h_n} \ \right) \cdot \overline{h_n}^{^{-1}}\cdot \ldots  \cdot \overline{h_1}^{^{-1}}.
\end{align*}
Hence $a_{1,x}=b^0_{1,x}+\sum_{i=1}^{n-1}(-1)^ib_{1,x}^i+(-1)^nb_{1,x}^n,$
where $b^0_{1,x}$ is the coefficient of $\bar{x}$ in $$\overline{h_1}\cdot \varphi(\overline{h_2}\otimes\ldots\otimes \overline{h_n})\cdot \overline{h_n}^{^{-1}}\cdot \ldots \cdot \overline{h_1}^{^{-1}}.$$ Further, for $i\in \{1,\ldots, n-1\},$ $b_{1,x}^i$ is the coefficient of $\bar{x}$ in
\begin{align*}
&\alpha(h_{i-1},h_i)\alpha(h_{i-1},h_{i-1}^{-1})^{-1}\alpha(h_i,h_i^{-1})^{-1}\alpha(h_{i}^{-1},h_{i-1}^{-1})\\
&\varphi(\overline{h_1}\otimes\ldots\otimes\overline{h_{i-1}h_i}\otimes\ldots\otimes \overline{h_n})
\cdot \overline{h_n}^{^{-1}}\cdot \ldots\cdot\overline{h_i^{-1}h_{i-1}^{-1}}\cdot \ldots\cdot \overline{h_1}^{^{-1}}
\end{align*}
and $b_{1,x}^n$ is the coefficient of $\bar{x}$ in $\varphi(\overline{h_1}\otimes\ldots \otimes \overline{h_{n-1}})\cdot\overline{h_{n-1}}^{^{-1}}\cdot \ldots \cdot \overline{h_{n-1}}^{^{-1}}.$

On the other hand we have
\begin{align*}
&\partial^{n-1}(\pi^{n-1}_{G,x,\alpha}(\varphi))(h_1,\ldots, h_n)=h_1\pi^{n-1}_{G,x,\alpha}(\varphi)(h_2,\ldots, h_n)\\
&+\sum_{i=1}^{n-1}(-1)^i\pi^{n-1}_{G,x,\alpha}(\varphi)(h_1,\ldots, h_{i-1}h_i,\ldots, h_n)+(-1)^n\pi^{n-1}_{G,x,\alpha}(\varphi)(h_1,\ldots, h_{n-1})\\
&=h_1(a^0_{1,x}\bar{x})+\sum_{i=1}^{n-1}(-1)^ia^i_{1,x}\bar{x}+(-1)^na_{1,x}^n\bar{x}.
\end{align*}
Here $a_{1,x}^0$ is the coefficient of $\bar{x}$  in \[\varphi(\overline{h_2}\otimes\ldots\otimes\overline{h_n})\cdot \overline{h_n}^{^{-1}}\cdot \ldots\cdot \overline{h_2}^{^{-1}},\] $a_{1,x}^i $ is the coefficient of $\bar{x}$ in
\[\alpha(h_{i-1}h_i,h^{-1}_ih_{i-1}^{-1})^{-1}\varphi(\overline{h_1}\otimes\ldots\otimes\overline{h_{i-1}h_i}\otimes
\ldots\otimes\overline{h_n})\cdot \overline{h_n}^{^{-1}}\cdot\ldots\cdot \overline{h_i^{-1}h_{i-1}^{-1}}\cdot \ldots\cdots \overline{h_1}^{^{-1}}\]
and $a_{1,x}^n$ is the coefficient of $\bar{x}$ in \[\varphi(\overline{h_1}\otimes\ldots\otimes\overline{h_{n-1}})\cdot\overline{h_{n-1}}^{^{-1}}\cdot \ldots \cdot \overline{h_1}^{^{-1}}.\] It follows that $a_{1,x}^n=b_{1,x}^n.$

 The following equality holds
 \[h_1(a_{1,x}^0\bar{x})=\alpha(h_1,x)\alpha(x,h_1)^{-1}a_{1,x}^0\bar{x}=b_{1,x}^0\bar{x},\] since $b^0_{1,x}$ is the coefficient of $\bar{x}$ in \[\overline{h_1}\cdot (\varphi(\overline{h_2}\otimes\ldots\otimes \overline{h_n})\cdot \overline{h_n}^{^{-1}}\cdot \ldots \cdot \overline{h_2}^{^{-1}})\cdot \overline{h_1}^{^{-1}}.\]
 It is easily checked that $a_{1,x}^i=b_{1,x}^i$ for all $i\in \{1,\ldots, n-1\}$ since $\alpha(1,h_{i-1}^{-1})=1.$
This concludes the proof of the first statement.

$b)$ Consider the map $\psi:G^{\times n}\to k\bar{x}$ and the elements $g_1,\ldots, g_n\in G.$ According to \ref{prima''} we have

\begin{align*}
&\delta^{n-1}(\nu_{G,x,\alpha}^{n-1}(\psi))(\overline{g_1}\otimes\ldots\otimes\overline{g_n})=\overline{g_1}\cdot \psi(g_2,\ldots, g_n)\cdot \overline{g_2}\cdot \ldots\cdot \overline{g_n}\\
&\quad+\sum_{i=1}^{n-1}(-1)^i\alpha(g_{i-1},g_i)\psi(g_1,\ldots, g_{i-1}g_i,\ldots, g_n)\cdot \overline{g_1}\cdot \ldots \cdot \overline{g_{i-1}g_i}\cdot \ldots\cdot \overline{g_n}\\
&\quad +(-1)^n\psi(g_1,\ldots, g_n)\cdot \overline{g_1}\cdot \ldots\cdot \overline{g_{n-1}}\cdot \overline{g_n}.\\
\end{align*}
On the other hand we get
\begin{align*}
&\nu^n_{G,x,\alpha}(\partial^{n-1}(\psi))(\overline{g_1}\otimes \ldots \otimes \overline{g_n})=\partial^{n-1}(\psi)(g_1,\ldots, g_n)\cdot \overline{g_1}\cdot \ldots \cdot \overline{g_n}\\
&=\left(g_1\psi(g_2,\ldots,g_n)+\sum_{i=1}^{n-1}(-1)^i\psi(g_1,\ldots , g_{i-1}g_i,\ldots, g_n)+(-1)^n\psi(g_1,\ldots, g_{n-1})\right)\cdot \overline{g_1}\cdot \ldots \cdot \overline{g_n}\\
&=\overline{g_1}\cdot \psi(g_2,\ldots, g_n)\cdot \overline{g_2}\cdot \ldots\cdot \overline{g_n}\\
&\quad +\sum_{i=1}^{n-1}(-1)^i\alpha(g_{i-1},g_i)\psi(g_1,\ldots, g_{i-1}g_i,\ldots, g_n)\cdot \overline{g_1}\cdot \ldots \cdot \overline{g_{i-1}g_i}\cdot \ldots\cdot \overline{g_n}\\
&\quad +(-1)^n\psi(g_1,\ldots, g_n)\cdot \overline{g_1}\cdot \ldots\cdot \overline{g_{n-1}}\cdot \overline{g_n}.\\
\end{align*}

$c)$ Let $\psi:G^{\times n}\to k\bar{x},$ $x\in Z(G)$ and $g_1,\ldots, g_n\in G.$ Notice that
\[\pi_{G,x,\alpha}^n(\nu_{G,x,\alpha}^n(\psi))(g_1,\ldots, g_n)=a_{1,x}\bar{x},\] where $a_{1,x}$ is the coefficient of $\bar{x}$ in
\begin{align*}
&\nu_{G,x,\alpha}^n(\psi)(\overline{g_1}\otimes\ldots\otimes\overline{g_n})\cdot \overline{g_n}^{^{-1}}\cdot \ldots \cdot \overline{g_1}^{^{-1}}\\
&=\psi(g_1,\ldots, g_n)\cdot \overline{g_1}\cdot \ldots \cdot \overline{g_n}\cdot \overline{g_n}^{^{-1}}\cdot \ldots \cdot \overline{g_1}^{^{-1}}\\
&=\psi(g_1,\ldots, g_n).
\end{align*}
We obtain $\pi^n_{G,x,\alpha}\circ \nu^n_{G,x,\alpha}=\id_{C^n(G,k\bar{x})}.$

Statement $d)$ is a consequence of \cite[Theorem 6.3]{LiZh}, since the definition of  $\nu_{G,x}^*$ is similar to the general one.

$e)$  We will show that $\pi_{G,x}^n\circ \nu_{G,x}^n=\id_{C^n(C_G(x),kx)}$ for some $x\in X$. Let $\psi:(C_G(x))^{\times n}\rightarrow kx$ be a set map
and $h_1,\ldots, h_n\in C_G(x)$.
By \ref{prima'} we obtain that
\[\pi_{G,x}^n(\nu_{G,x}^n(\psi))(h_1,\ldots,h_n)=a_{1,x}x,\] where $a_{1,x}$ is the of coefficient of $x$ in
\[\nu_{G,x}^n(\psi)(h_1\otimes\ldots\otimes h_n)h_n^{-1}\ldots h_1^{-1}=\sum_{j=1}^{n_x}\psi(h_{j,1},\ldots,h_{j,n})x^{-1} x_j \]
The last above equality follows from \ref{trivial_def}. If $j=1$ then $x_1=x$ and $\gamma_{1,x}=1$ thus
\[\gamma_{1,x}h_1=h_1\gamma_{s_1^1,x}\]
with $\gamma_{s_1^1,x}=1$ so $h_{1,1}=h_1$. We continue with the equations from \ref{trivial_def} to obtain
$h_{1,2}=h_2,\ldots, h_{1,n}=h_n$. Since if $j\in \{2,\ldots,n_x\}$ then $x_j\neq x$, we obtain that $a_{1,x}$ is the coefficient of $x$ in
$\psi(h_1,\ldots, h_n)$, which is what we need.
\end{proof}

Lemma \ref{lemma4.5} above is useful for the following proposition.
\begin{proposition}\label{proposition4.6} Let $n\in \Z,n\geq 0$.
\begin{itemize}
\item[$a)$] Let $x\in X.$ Then $\pi^*_{G,x,\alpha}$ induces the well-defined maps in cohomology
\[\pi^{*}_{G,x,\alpha}:\HH^*(k_{\alpha}G) \to \Hrm^*(C_G(x),k\bar{x}).\]
\item[$b)$] Let $x\in Z(G).$ Then $\nu^*_{G,x,\alpha}$ induces the well-defined maps in cohomology
\[\nu^{*}_{G,x,\alpha}:\Hrm^*(G,k\bar{x})\to \HH^*(k_{\alpha}G).\]
\item[$c)$] There is an embedding of cohomology spaces
\[\bigoplus_{x\in Z(G)}\nu^{n}_{G,x,\alpha}:\bigoplus_{x\in Z(G)}\Hrm^n(G,k\bar{x})\rightarrowtail \HH^n(k_{\alpha}G)\]
with
\[\bigoplus_{x\in Z(G)}\pi^{n}_{G,x,\alpha}:\HH^n(k_{\alpha}G)\twoheadrightarrow \bigoplus_{x\in Z(G)}\Hrm^n(G,k\bar{x}) \] its left inverse.
 \item[$d)$] If $G$ is abelian then there is an additive  centralizers group cohomology decomposition
 \[\bigoplus_{x\in G}\nu^{n}_{G,x,\alpha}:\bigoplus_{x\in G}\Hrm^n(G,k\bar{x})\simeq \HH^n(k_{\alpha}G).\]

\end{itemize}
\end{proposition}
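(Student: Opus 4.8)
The plan is to deduce all four statements from Lemma~\ref{lemma4.5}, using the single homological principle that a morphism of cochain complexes induces a well-defined $k$-linear map on cohomology. For statements a) and b) I would argue as follows. By Lemma~\ref{lemma4.5} a) the map $\pi^*_{G,x,\alpha}$ commutes with the differentials $\delta^*$ and $\partial^*$, so it is a morphism of cochain complexes; hence it sends $\Ker\delta^n$ into $\Ker\partial^n$ and $\Ima\delta^{n-1}$ into $\Ima\partial^{n-1}$, and therefore descends to a well-defined map $\pi^*_{G,x,\alpha}\colon \HH^*(k_\alpha G)\to \Hrm^*(C_G(x),k\bar{x})$ for $x\in X$. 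The identical argument applied to Lemma~\ref{lemma4.5} b) yields the induced map $\nu^*_{G,x,\alpha}\colon \Hrm^*(G,k\bar{x})\to \HH^*(k_\alpha G)$ for $x\in Z(G)$. These two steps are routine and carry no real difficulty.

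For statement c) I would assemble the induced maps of a) and b) into the direct sums over $x\in Z(G)$. Since passage to cohomology is functorial and respects composition, the cochain-level identity of Lemma~\ref{lemma4.5} c), namely $\bigl(\bigoplus_{x\in Z(G)}\pi^n_{G,x,\alpha}\bigr)\circ\bigl(\bigoplus_{x\in Z(G)}\nu^n_{G,x,\alpha}\bigr)=\bigoplus_{x\in Z(G)}\id$, passes verbatim to an identity of the induced maps on cohomology. Thus $\bigoplus_{x\in Z(G)}\pi^n_{G,x,\alpha}$ is a left inverse of $\bigoplus_{x\in Z(G)}\nu^n_{G,x,\alpha}$; a $k$-linear map admitting a left inverse is injective, and its left inverse is necessarily surjective. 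This immediately gives the embedding $\rightarrowtail$ together with the split surjection $\twoheadrightarrow$ claimed in c).

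The main content is statement d), and this is where I expect the real argument to lie. When $G$ is abelian one has $Z(G)=G$, every conjugacy class is a singleton so one may take $X=G$, and $C_G(x)=G$ for every $x\in G$. Under these identifications the embedding produced in c) becomes an injective $k$-linear map $\bigoplus_{x\in G}\nu^n_{G,x,\alpha}\colon \bigoplus_{x\in G}\Hrm^n(G,k\bar{x})\rightarrowtail \HH^n(k_\alpha G)$. To upgrade this injection to an isomorphism I would invoke the abstract decomposition (\ref{eq11}): specialized to the abelian case it reads $\HH^n(k_\alpha G)\cong \bigoplus_{x\in G}\Hrm^n(G,k\bar{x})$, so the source and target of $\bigoplus_{x\in G}\nu^n_{G,x,\alpha}$ are finite-dimensional $k$-vector spaces of equal dimension. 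An injective linear map between finite-dimensional vector spaces of the same dimension is automatically bijective, whence $\bigoplus_{x\in G}\nu^n_{G,x,\alpha}$ is the asserted isomorphism. The one point demanding care is checking that the indexing in (\ref{eq11}) genuinely matches ours, i.e. that $X=G$ and $C_G(x)=G$; this is exactly the abelianness hypothesis, and it also guarantees that the dimension count compares the correct summands. I would emphasize that we work over a field here, so the dimension-counting step is legitimate and no direct verification of the reverse composition $\bigoplus\nu\circ\bigoplus\pi=\id$ is needed.
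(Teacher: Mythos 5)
Your proposal is correct and follows essentially the same route as the paper: parts a)--c) are deduced directly from Lemma \ref{lemma4.5} by the standard fact that cochain maps commuting with differentials induce maps on cohomology (with the left-inverse identity passing to cohomology), and part d) is obtained by combining the injectivity from c) with the dimension count supplied by the abstract decomposition (\ref{eq11}) in the abelian case, where $Z(G)=X=G$. Nothing essential differs from the paper's argument.
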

\begin{proof}
 The first three statements follow immediately from statements $a),$ $b)$ and $c)$ of Lemma \ref{lemma4.5}.
 From statement $c)$ of the same lemma, we know that $\bigoplus_{x\in G}\nu^n_{G,x,\alpha}$ is a $k$-linear monomorphism.
 Since the isomorphisms given in (\ref{eq11}) determine the equality
 \[\mathrm{dim}_k(\HH^n(k_{\alpha}G))=\bigoplus_{x\in G}\mathrm{dim}_k(\Hrm^n(G,k\bar{x})),\] the result of $d)$ is obvious since $G$ is abelian.
 \end{proof}

 \begin{remark}
 \begin{itemize}

\item[a)] For the trivial 2-cocyle $\alpha=\one_k,$ by Lemma \ref{lemma4.5} statement $e)$, we obtain the splitting $k$-monomorphism
\[\nu^n_{G,x}:\Hrm^n(C_G(x),k)\to \HH^n(kG)\] for any $x\in X.$ We identified here $kx$ to $k$ as trivial $kC_G(x)$-modules.
  Consequently, we recover the explicit additive centralizers decomposition from \cite[Remark 5.2 or Theorem 6.3]{LiZh}
  \[\HH^n(kG)\simeq \bigoplus_{x\in X}\Hrm^n(C_G(x),k)\] by applying the same arguments as in Proposition \ref{proposition4.6}, statement $d).$
  \item[b)] In \cite[Theorem 2.1]{CiSo} Cibils and Solotar obtained an isomorphism of graded rings between Hochschild cohomology ring of a group algebra
  $\HH^*(kG)$ and $kG\otimes \Hrm^*(G,k)$, if $G$ is abelian. Explicit formulas were further developed, for non-commutative groups also, in \cite{SiWi} and more recently
  in \cite{LiZh}. Proposition \ref{proposition4.6}, statement d) can be viewed as an additive version of this result for twisted group algebras. We ask if someone can develop a graded ring isomorphism explicit formula in this case?
  \end{itemize}
 \end{remark}
\section{An embedding of additive centralizers decomposition of symmetric cohomology into symmetric Hochschild cohomology}\label{sec5}

The aim of this section is to prove that $\pi^*_{G,x,\alpha},$ $\nu^*_{G,x,\alpha}$ and $\nu^*_{G,x}$ are $\Sigma_{*+1}$-maps.

\begin{proposition}\label{proposition5.1}
Let $n\in \Z,$ $n\geq 0$ and consider the twisted group algebra $k_{\alpha}G.$ The following statements hold.
\begin{itemize}
\item[$a)$] Let $x\in X$ and $\varphi\in C^n(k_{\alpha}G,k_{\alpha}G).$ Then
\[\pi^n_{G,x,\alpha}(\tau \varphi)=\tau\pi^n_{G,x,\alpha}( \varphi)\]
for any $\tau\in \Sigma_{n+1};$

\item[$b)$] Let $x\in Z(G)$ and $\psi\in C^n(G,k\bar{x}).$ Then
\[\nu^n_{G,x,\alpha}(\tau \psi)=\tau\nu^n_{G,x,\alpha}( \psi)\]
for any $\tau\in \Sigma_{n+1};$

\item[$c)$] Let $x\in X$ and $\psi\in C^n(C_G(x),kx).$ Then
\[\nu^n_{G,x}(\tau \psi)=\tau\nu^n_{G,x}( \psi)\]
for any $\tau\in \Sigma_{n+1}.$
\end{itemize}

\begin{proof}
It suffices to verify the statements for the generators $(i,i+1)\in \Sigma_{n+1},$ where $i\in \{1,\ldots, n\}.$

$a)$ Let $h_1,\ldots, h_n\in C_G(x).$ For $i=1$ we have
\[\pi^n_{G,x,\alpha}((1,2)\varphi)=a_{1,x}\bar{x},\] where $a_{1,x}$ is the coefficient of $\bar{x}$ in
\[-\overline{h_1}\cdot(\varphi(\overline{h_1}^{^{-1}}\otimes\overline{h_1}\cdot\overline{h_2}\otimes \ldots\otimes\overline{h_n})\cdot \overline{h_n}^{^{-1}}\cdot\ldots \cdot \overline{h_2}^{^{-1}})\cdot \overline{h_1}^{^{-1}}.\]

 If $a'$ is the coefficient of $\bar{x}$ in
 $$A:=\varphi(\overline{h_1}^{^{-1}}\otimes \overline{h_1}\cdot \overline{h_2}\otimes\ldots\otimes\overline{h_n})\cdot \overline{h_n}^{^{-1}}\cdot \ldots \overline{h_2}^{^{-1}}$$ then $$a_{1,x}=-a'\alpha(h_1,x)\alpha(x,h_1)^{-1}$$ and
 \begin{align*}
 ((1,2)\pi^n_{G,x,\alpha}(\varphi))(h_1,\ldots, h_n)&=-h_1\pi^n_{G,x,\alpha}(\varphi)(h_1^{-1}\otimes h_1h_2\otimes\ldots\otimes h_n)\\
 &=-h_1(a_{1,x}'\bar{x})\\
 &=-a_{1,x}'\alpha(h_1,x)\alpha(x,h_1)^{-1}\bar{x}.
 \end{align*}
 Here $a'_{1,x}$ is the coefficient of $\bar{x}$ in
 \begin{align*}
&\varphi(\overline{h_1^{-1}}\otimes\overline{h_1h_2}\otimes\ldots \otimes \overline{h_n})\cdot \overline{h_n}^{^{-1}}\cdot\ldots \cdot \overline{h_1h_2}^{^{-1}}\cdot \overline{h_1^{-1}}^{^{-1}}\\
&=\alpha(h_1,h_1^{-1})\alpha(h_1,h_2)^{-1}\alpha(h_1^{-1},h_1)^{-1}\alpha(h_1h_2,h_2^{-1}h_1^{-1})^{-1}\alpha(h_2,h_2^{-1})
\alpha(h_2^{-1}h_1^{-1},h_1)A\\
&=(\alpha(h_1,h_1^{-1})\alpha(h_2,h_2^{-1}h_1^{-1}))^{-1}\alpha(h_2^{-1}h_1^{-1},h_1)\alpha(h_2,h_2^{-1})A\\
&=(\alpha(h_1,h_1^{-1})\alpha(h_2,h_2^{-1}h_1^{-1})\alpha(h_2^{-1},h_1^{-1}))^{-1}\alpha(h_2^{-1},h_1^{-1})
\alpha(h_2^{-1}h_1^{-1},h_1)\alpha(h_2,h_2^{-1})A\\
&=A,
 \end{align*}
 hence $a_{1,x}'=a'.$

 For $1<i<n$ we similarly verify that
 \[\pi^n_{G,x,\alpha}((i,i+1)\varphi)=(i,i+1)\pi^n_{G,x,\alpha}(\varphi).\]
 Set $i=n.$ We have
 \[\pi^n_{G,x,\alpha}((n,n+1)\varphi)(h_1,\ldots,h_n)=a_{1,x}\bar{x},\] where $a_{1,x}''$ is the coefficient of $\bar{x}$ in
 \begin{align*}
 &((n,n+1)\varphi)(\overline{h_1}\otimes\ldots\otimes\overline{h_n})\cdot \overline{h_n}^{^{-1}}\cdot \ldots \cdot \overline{h_1}^{^{-1}}\\
 &=-\varphi(\overline{h_1}\otimes\ldots\otimes \overline{h_{n-1}h_n}\otimes\overline{h_n^{-1}})\cdot \overline{h_{n-1}}^{^{-1}}\cdot \ldots \cdot \overline{h_1}^{^{-1}}\alpha(h_{n-1},h_n)\alpha(h_n,h_n^{-1})^{-1}.
 \end{align*}
Further we get
\[((n,n+1)\pi^n_{G,x,\alpha}(\varphi))(h_1,\ldots, h_n)=-\pi^n_{G,x,\alpha}(h_1,\ldots, h_{n-1}h_n, h_n^{-1})=a'_{1,x}\bar{x},\]
where $a_{1,x}'''$ is the coefficient of $\bar{x}$ in
\begin{align*}
&-\varphi(\overline{h_1}\otimes\ldots\otimes\overline{h_{n-1}h_n}\otimes\overline{h_n^{-1}})\cdot \overline{h_n^{-1}}^{^{-1}}\cdot \overline{h_{n-1}h_n}^{^{-1}}\cdot \ldots \cdot \overline{h_1}^{^{-1}}\\
&=-\varphi(\overline{h_1}\otimes\ldots\otimes\overline{h_{n-1}h_n}\otimes\overline{h_n^{-1}})\cdot \overline{h_n}\cdot \overline{h_n}^{^{-1}}\cdot \overline{h_{n-1}}^{^{-1}}\cdot \ldots \cdot \overline{h_1}^{^{-1}}\alpha(h_n^{-1},h_n)^{-1}\alpha(h_{n-1},h_n).
\end{align*}
Thus $a_{1,x}''=a_{1,x}'''.$

$b)$ The cases $i=1$ and $i=n$ are easily verified. For $1<i<n$ we check the equality
\[\nu^n_{G,x,\alpha}((i,i+1)\psi)=(i,i+1)\nu^n_{G,x,\alpha}(\psi),\] for all $\psi\in C^n(G,k\bar{x}).$
Let $g_1,\ldots, g_n\in G.$ Then
\begin{align*}
\nu^n_{G,x,\alpha}((i,i+1)\psi)(\overline{g_1}\otimes\ldots \otimes\overline{g_n})&=((i,i+1)\psi)(g_1,\ldots, g_n)\cdot \overline{g_1}\cdot\ldots \cdot\overline{g_n}\\
&=-\psi(g_1,\ldots, g_{i-1}g_i,g_i^{-1},g_ig_{i+1},\ldots, g_n)\cdot \overline{g_1}\cdot\ldots \cdot\overline{g_n}.
\end{align*}
On the other hand we get:
\begin{align*}
&((i,i+1)\nu^n_{G,x,\alpha})(\overline{g_1}\otimes\ldots\otimes \overline{g_n})\\
&=-\nu^n_{G,x,\alpha}(\psi)(\overline{g_1}\otimes\ldots \otimes \overline{ g_{i-1}}\cdot \overline{g_i}\otimes \overline{g_i}^{^{-1}}\otimes\overline{g_i} \cdot \overline{g_{i+1}}\otimes\ldots \otimes \overline{ g_n})\\
&=-\psi(g_1,\ldots, g_{i-1}g_i,g_i^{-1},g_ig_{i+1},\ldots, g_n)\cdot \overline{g_1}\cdot\ldots \cdot\overline{g_n}\\
&\times \alpha(g_{i-1},g_i)\alpha(g_i,g_i^{-1})^{-1}\alpha(g_i,g_{i+1})\alpha(g_{i-1}g_i,g_i^{-1})\alpha(g_i,g_{i+1})^{-1}\\
&=-\psi(g_1,\ldots, g_{i-1}g_i,g_i^{-1},g_ig_{i+1},\ldots, g_n)\cdot \overline{g_1}\cdot\ldots \cdot\overline{g_n}.
\end{align*}

$c)$ Let $g_1,\ldots, g_n\in G.$ For $i=1$ we have
\begin{align*}
&\nu^n_{G,x}((1,2)\psi)(g_1\otimes\ldots\otimes g_n)\\
&=\sum_{j=1}^{n_x}((1,2)\psi)(h_{j,1},\ldots,h_{j,n})x^{-1} x_jg_1\ldots g_n\\
&=-\sum_{j=1}^{n_x}(h_{j,1}\psi(h_{j,1}^{-1},h_{j,1}h_{j,2},\ldots,h_{j,n})x^{-1})x_jg_1\ldots g_n,
\end{align*}
where $h_{j,1},\ldots, h_{j,n}$ are determined as in \ref{trivial_def}.
Here, for later use, we set $$B=\sum_{j=1}^{n_x}(h_{j,1}\psi(h_{j,1}^{-1},h_{j,1}h_{j,2},\ldots,h_{j,n})x^{-1})x_j.$$
On the other hand have
\begin{align*}
&((1,2)\nu^n_{G,x}(\psi))(g_1\otimes\ldots\otimes g_n)\\
&=-g_1\nu^n_{G,x}(\psi)(g_1^{-1}\otimes g_1g_2\otimes\ldots \otimes g_n)\\
&=-\sum_{i=1}^{n_x}g_1(\psi(h_{i,1}',\ldots,h'_{i,n})x^{-1})x_ig_2\ldots g_n.
\end{align*}
We need to verify the equality
\begin{equation} \label{eqn51}
B=\sum_{i=1}^{n_x}g_1(\psi(h_{i,1}',\ldots,h'_{i,n})x^{-1})x_ig_1^{-1},
\end{equation}

where, for $i\in \{1,\ldots,n_x\},$ the elements $h_{i,1}',\ldots,h'_{i,n}\in C_G(x)$ are determined by the relations
\begin{align*}
\gamma_{i,x}\ g_1^{-1}&=h'_{i,1}\gamma_{t_i^1,x},\\
\gamma_{t_i^1,x}\ g_1g_2&=h'_{i,2}\gamma_{t_i^2,x},\\
&\vdots\\
\gamma_{t_i^{n-1},x}\ g_n&=h'_{i,n}\gamma_{t_i^n,x}.\\
\end{align*}
Since when $j$ runs through the elements of $\{1,\ldots, n_x\},$ $s_j^1$ runs through the elements of the same set, we obtain that the right hand member
of the equality (\ref{eqn51}) is
\begin{align*}
\sum_{j=1}^{n_x}g_1(\psi(h_{s_j^1,1}',\ldots,h'_{s_j^1,n})x^{-1})x_{s_j^1}g_1^{-1},
\end{align*}
by replacing $i:=s_j^1.$
Next,  using \ref{trivial_def} again we get
 \begin{align*}
 \gamma_{s_j^1,x}g_1^{-1}=h^{-1}_{j,1}\gamma_{j,x}&\Rightarrow h'_{s_j^1,1}=h^{-1}_{j,1},\\
 \gamma_{j,x}g_1g_2=h_{j,1}h_{j,2}\gamma_{s_j^2,x}&\Rightarrow h'_{s_j^1,2}=h_{j,1}h_{j,2},\\
 &\vdots\\
 \gamma_{s_j^{n-1},x}g_n=h_{j,n}\gamma_{s_j^n,x}&\Rightarrow h'_{s_j^1,n}=h_{j,n}.\\
 \end{align*}
 Moreover
 \[x_{s_j^1}=\gamma_{s_j^1,x}^{-1}\ x\ \gamma_{s_j^1,x}=g^{-1}_1\ \gamma_{j,x}^{-1}\ h_{j,1}\ x\ h_{j,1}^{-1}\ \gamma_{j,x}\ g_1=g_1^{-1}\ x_j\ g_1\]
and then
\begin{align*}
&\sum_{j=1}^{n_x}g_1(\psi(h_{s_j^1,1}',\ldots,h'_{s_j^1,n})x^{-1})x_{s_j^1}g_1^{-1}\\
&=\sum_{j=1}^{n_x}g_1(\underbrace{\psi(h^{-1}_{j,1},h_{j,1}h_{j,2},\ldots, h_{j,n})x^{-1}}_{\in k})g_1^{-1}x_jg_1g_1^{-1}\\
&=\sum_{j=1}^{n_x}\psi(h^{-1}_{j,1},h_{j,1}h_{j,2},\ldots, h_{j,n})x^{-1}x_j=B.
\end{align*}
Similar arguments work in any of the cases $1<i\leq n.$
\end{proof}
\end{proposition}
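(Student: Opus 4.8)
The plan is to reduce the whole statement to the Coxeter generators $\sigma_i=(i,i+1)$, $1\le i\le n$: since these generate $\Sigma_{n+1}$ and all three maps $\pi^n_{G,x,\alpha},\nu^n_{G,x,\alpha},\nu^n_{G,x}$ are $k$-linear, once I check $\pi^n_{G,x,\alpha}(\sigma_i\varphi)=\sigma_i\,\pi^n_{G,x,\alpha}(\varphi)$ and its analogues on generators, equivariance for an arbitrary $\tau\in\Sigma_{n+1}$ follows by writing $\tau$ as a word in the $\sigma_i$. For each map I would split the verification into the interior case $1<i<n$ and the two boundary cases $i=1$ and $i=n$, which are special because the action (\ref{action_on_chain}) touches the outer factors $\overline{g_1}$, $\overline{g_n}$ and invokes the left, respectively right, module structure.

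For part (a) I would start from $\pi^n_{G,x,\alpha}(\varphi)(h_1,\dots,h_n)=a_{1,x}\bar x$, where $a_{1,x}$ is the coefficient of $\bar x$ in $\varphi(\overline{h_1}\otimes\cdots\otimes\overline{h_n})\cdot\overline{h_n}^{-1}\cdots\overline{h_1}^{-1}$, substitute $\sigma_i\varphi$ from (\ref{action_on_chain}), and collect the scalar $\alpha$-factors produced by regrouping basis elements. The key point is that multiplying arguments such as $\overline{h_{i-1}}\cdot\overline{h_i}$ and inserting $\overline{h_i}^{-1}$ introduces cocycle factors governed by (\ref{eq31}) and (\ref{eq33}), while the trailing product $\overline{h_n}^{-1}\cdots\overline{h_1}^{-1}$ contributes matching inverse factors; I expect the cocycle identity together with the normalizations (\ref{eq32}) to collapse everything to the single scalar that $\sigma_i$ produces when acting on the group cochain $\pi^n_{G,x,\alpha}(\varphi)$. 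At $i=n$ the extra subtlety is that the Hochschild action multiplies on the right by $\overline{h_n}$, so I would verify that this right factor is cancelled exactly by the first inverse factor $\overline{h_n}^{-1}$ occurring in the definition of $\pi^n$.

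Part (b) should be lighter: since $x\in Z(G)$, the outer product in $\nu^n_{G,x,\alpha}(\psi)(\overline{g_1}\otimes\cdots\otimes\overline{g_n})=\psi(g_1,\dots,g_n)\cdot\overline{g_1}\cdots\overline{g_n}$ is insensitive to the regroupings $g_{i-1}g_i,\ g_i^{-1},\ g_ig_{i+1}$ appearing in the action, so I would only need to match the scalar $\psi$-values and check that the $\alpha$-factors accumulated from reassociating the basis product cancel, again by (\ref{eq31}) and (\ref{eq32}); the boundary cases $i=1,n$ are then immediate.

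The hard part will be (c), where $\nu^n_{G,x}$ is built from a right transversal $\gamma_{1,x},\dots,\gamma_{n_x,x}$ of $C_G(x)$ in $G$ and the conjugates $x_j=\gamma_{j,x}^{-1}x\gamma_{j,x}$, as in \ref{trivial_def}. There are no cocycles here, but instead I must track how this transversal bookkeeping transforms under the action. For $\sigma_1$ the plan is first to show that the reindexing $j\mapsto s_j^1$, defined by $\gamma_{j,x}g_1=h_{j,1}\gamma_{s_j^1,x}$, is a bijection of $\{1,\dots,n_x\}$, which lets me reindex the sum defining $\sigma_1\nu^n_{G,x}(\psi)$ by $i:=s_j^1$; then I would compute the transformed data $h'_{s_j^1,\bullet}$ and the conjugate $x_{s_j^1}=g_1^{-1}x_jg_1$ explicitly and verify the identity (\ref{eqn51}) term by term, using that the scalar $\psi(h_{j,1}^{-1},\dots)x^{-1}\in k$ commutes past $g_1^{-1}x_jg_1$. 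Establishing that $j\mapsto s_j^1$ is a bijection and that $x_{s_j^1}=g_1^{-1}x_jg_1$ is the genuine content of the argument; the remaining generators $1<i\le n$ then follow from the same transversal manipulation applied at the corresponding slot.
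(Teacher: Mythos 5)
Your plan follows the paper's own proof essentially step for step: reduction to the Coxeter generators, cancellation of the accumulated $2$-cocycle factors via (\ref{eq31})--(\ref{eq33}) in parts (a) and (b) (including the special role of the right factor $\overline{h_n}$ at $i=n$), and, in part (c), the bijectivity of the reindexing $j\mapsto s_j^1$ together with the identities $h'_{s_j^1,\bullet}$ in terms of $h_{j,\bullet}$ and $x_{s_j^1}=g_1^{-1}x_jg_1$, which is exactly the content of the paper's verification of (\ref{eqn51}). The approach is correct and not genuinely different from the published argument.
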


\begin{proof}\textbf{(of Theorem \ref{thm11})}

$a)$ Let $x\in Z(G).$ According to Lemma \ref{lemma4.5} assertion $b),$ $\nu^*_{G,x,\alpha}$ commutes with the differentials. Consequently, Proposition \ref{proposition5.1} implies
\[\Ima\left(\left.\nu^n_{G,x,\alpha}\right|_{CS^n(G,k\bar{x})}\right)\subseteq CS^n(k_{\alpha}G,k_{\alpha}G),\] where
$\left.\nu^n_{G,x,\alpha}\right|_{CS^n(G,k\bar{x})}$ is the restriction of $\nu^n_{G,x,\alpha}$ to $CS^n(G,k\bar{x}).$ It follows that  the map
\[\nu^{n,S}_{G,x,\alpha}([\psi]):=\left[\left.\nu^n_{G,x,\alpha}\right|_{CS^n(G,k\bar{x})}(\psi)\right],\]
for any $\psi\in \Ker \partial^{n}\cap CS^n(G,k\bar{x})$, is well defined. The analogous statements are fulfilled by the first assertions of Proposition \ref{proposition5.1} and of Lemma \ref{lemma4.5} respectively.  The fact that $\bigoplus_{x\in Z(G)}\nu_{G,x,\alpha}^{n,S}$ is an embedding, admitting $\bigoplus_{x\in Z(G)}\pi_{G,x,\alpha}^{n,S}$ as its left inverse, is a consequence of Lemma \ref{lemma4.5} statement $c).$
The proof of the assertion $b)$ is similar to that of $a)$, taking into account the remaining affirmations of Lemma \ref{lemma4.5} and of Proposition \ref{proposition5.1}.
\end{proof}

\section{Proof of Proposition \ref{prop62} and Theorem \ref{thm13}}\label{sec6}

\begin{proof}\textbf{(of Propostion \ref{prop62})}
By \ref{subsec61} and some arguments of \cite[Proposition 5.3]{Si1} it is enough to verify that $\beta_G^n(\varphi)\in CS^{n+1}(G,A)$ for any $\varphi\in CS^n(G,C)$.

Let $\varphi\in CS^n(G,C)$ and $(i,i+1)\in\Sigma_{n+1}, i\in\{1,\ldots,n+1\}$.

For $i=1$ we obtain
\begin{align*}
((1,& 2)\beta_G^n(\varphi))(g_1,\ldots,g_{n+1})\\
&=-g_1(r_{\iota}(\partial_B^n(s_{\pi}\circ \varphi)(g_1^{-1},g_1g_2,\ldots, g_{n+1})))\\
&=r_{\iota}\left[-g_1\left(g_1^{-1}s_{\pi}(\varphi(g_1g_2,g_3,\ldots,g_{n+1}))-s_{\pi}(\varphi(g_2,g_3,\ldots,g_{n+1}))+s_{\pi}(\varphi(g_1^{-1},g_1g_2g_3,\ldots,g_{n+1}))+\right.\right.\\
&\left.\left.\cdots+(-1)^ns_{\pi}(\varphi (g_1^{-1}, g_1g_2,\ldots, g_ng_{n+1}))+(-1)^{n+1}s_{\pi}( \varphi(g_1^{-1},g_1g_2,\ldots,g_n))\right)\right]\\
&=r_{\iota}\left[s_{\pi}(-\varphi(g_1g_2,g_3,\ldots,g_{n+1}))-s_{\pi}(-g_1\varphi(g_2,g_3,\ldots,g_{n+1}))+s_{\pi}(-g_1\varphi(g_1^{-1},g_1g_2g_3,\ldots,g_{n+1}))+\right.\\&
\left.\cdots+(-1)^n s_{\pi}(-g_1\varphi (g_1^{-1}, g_1g_2,\ldots, g_ng_{n+1}))+(-1)^{n+1}s_{\pi}(-g_1\varphi(g_1^{-1},g_1g_2,\ldots,g_n))\right]\\
&=r_{\iota}\left[g_1(s_{\pi}\circ\varphi)(g_2,g_3,\ldots,g_{n+1})-(s_{\pi}\circ\varphi)(g_1g_2,g_3,\ldots,g_{n+1}))+(s_{\pi}\circ\varphi)(g_1,g_2g_3,\ldots,g_{n+1})+\right.\\&
\left.\cdots+(-1)^n (s_{\pi}\circ\varphi)( g_1,g_2,\ldots, g_ng_{n+1}))+(-1)^{n+1}(s_{\pi}\circ \varphi)(g_1,g_2,\ldots,g_n))\right]\\&
=r_{\iota}\left(\partial_B^n(s_{\pi}\circ\varphi)(g_1,\ldots,g_{n+1})\right)\\&
=\beta_G^n(\varphi)(g_1,\ldots,g_{n+1}),
\end{align*}
where in the  third equality we used that $s_\pi$ is a symmetric section, compatible with the actions; in the fourth equality we reversed the first and second term and we used that $\varphi$ is a symmetric $n$-cochain from  $CS^n(G,C).$
For $i=n+1$ we have
\begin{align*}
((n,& n+1)\beta_G^n(\varphi))(g_1,\ldots,g_{n+1})\\
&=-r_{\iota}[\partial_B^n(s_{\pi}\circ \varphi)(g_1,g_2,\ldots, g_ng_{n+1},g_{n+1}^{-1}))]\\
&=-r_{\iota}\left[g_1(s_{\pi}(\varphi(g_2,g_3,\ldots,g_ng_{n+1},g_{n+1}^{-1}))-s_{\pi}(\varphi(g_1g_2,g_3,\ldots,g_ng_{n+1},g_{n+1}^{-1}))+\right.\\
&\left.\cdots+(-1)^ns_{\pi}(\varphi (g_1, g_2,\ldots, g_n))+(-1)^{n+1}s_{\pi}( \varphi(g_1,g_2,\ldots,g_ng_{n+1}))\right]\\
&=r_{\iota}\left[s_{\pi}(-g_1\varphi(g_2,g_3,\ldots,g_ng_{n+1},g_{n+1}^{-1}))+s_{\pi}(-\varphi(g_1g_2,g_3,\ldots,g_ng_{n+1},g_{n+1}^{-1}))+\right.\\&
\left.\cdots-(-1)^n s_{\pi}(\varphi (g_1,g_2,\ldots, g_n))-(-1)^{n+1}s_{\pi}(\varphi(g_1,g_2,\ldots,g_ng_{n+1}))\right]\\
&=r_{\iota}\left[g_1(s_{\pi}\circ\varphi)(g_2,g_3,\ldots,g_{n+1})-(s_{\pi}\circ\varphi)(g_1g_2,g_3,\ldots,g_{n+1}))+\right.\\&
\left.\cdots+(-1)^n (s_{\pi}\circ\varphi)( g_1,g_2,\ldots, g_ng_{n+1}))+(-1)^{n+1}(s_{\pi}\circ \varphi)(g_1,g_2,\ldots,g_n))\right]\\&
=r_{\iota}\left(\partial_B^n(s_{\pi}\circ\varphi)(g_1,\ldots,g_{n+1})\right)\\&
=\beta_G^n(\varphi)(g_1,\ldots,g_{n+1}),
\end{align*}
where  in the fourth equality we reversed the last two terms and for the other terms we used $\varphi\in  CS^n(G,C).$

The other cases, $1<i<n+1$, are verified similarly.
\end{proof}
\begin{proof}\textbf{(of Theorem \ref{thm13})}
We show  that the diagram is  commutative in terms of cochain complexes. Let $h_1,\ldots, h_{n+1}\in C_G(x)$ and $\varphi\in CS^n(k_3G,k_3G)$. We fix the notations
\begin{equation}\label{eq631}
\begin{split}
\varphi(h_2\otimes\ldots \otimes h_{n+1})h_{n+1}^{-1}\ldots h_2^{-1}&=\sum_{y\in G}a_{0,\varphi,y}\ y,\\
\varphi(h_1\otimes\ldots \otimes h_ih_{i+1}\otimes h_{n+1})h_{n+1}^{-1}\ldots h_1^{-1}&=\sum_{y\in G}a_{i,\varphi,y}\ y,\quad i\in\{1,\ldots,n\},\\
\varphi(h_1\otimes\ldots \otimes h_{n})h_{n}^{-1}\ldots h_1^{-1}&=\sum_{y\in G}a_{n+1,\varphi,y}\ y,
\end{split}
\end{equation}
We have
\begin{align*}
\beta_{C_G(x)}^{n,S}&(\pi_{G,x}^{n,S}(\varphi))(h_1,\ldots, h_{n+1})=(r_{\iota_x}\circ\partial_{k_2}^n(s_{\pi_x}\circ\pi_{G,x}^n(\varphi)))(h_1,\ldots, h_{n+1})\\
&=r_{\iota_x}(h_1s_{\pi_x}(\pi_{G,x}^n(\varphi)(h_2,\ldots,h_{n+1}))+\sum_{i=1}^n(-1)^i s_{\pi_x}(\pi_{G,x}^n(\varphi)(h_1,\ldots,h_ih_{i+1},\ldots,h_{n+1}))\\
& +(-1)^{n+1}s_{\pi_x}(\pi_{G,x}^n(\varphi)(h_1,\ldots,h_n)) )\\
&=r_{\iota_x}\left(\sum_{i=0}^{n+1}(-1)^is_{\pi_x}(a_{i,\varphi,x}\ x)\right)\\
&=r_{\iota}\left(\sum_{i=0}^{n+1}(-1)^is_{\pi}(a_{i,\varphi,x})\right) x.
\end{align*}

On the other hand, by \ref{prima'}
$$\pi_{G,x}^{n+1,S}(\mathbb{B}_G^n(\varphi))(h_1,\ldots,h_{n+1})=a_{1,x}\ x,$$
where $a_{1,x}$ is the coefficient of $x$ in
\begin{align*}
&\quad (r_{\iota_G}\circ \delta_{k_2G}^n(s_{\pi_G}\circ\varphi))(h_1\otimes\ldots\otimes h_{n+1})h_{n+1}^{-1}\ldots h_1^{-1}\\
&=r_{\iota_G}( h_1(s_{\pi_G} \circ\varphi)(h_2,\ldots,h_{n+1})+\sum_{i=1}^n(-1)^i (s_{\pi_G}\circ\varphi)(h_1,\ldots,h_ih_{i+1},\ldots,h_{n+1})\\
&+(-1)^{n+1}(s_{\pi_G}\circ\varphi)(h_1,\ldots,h_n) )h_{n+1}^{-1}\ldots h_1^{-1}\\
&=r_{\iota_G}\left( s_{\pi_G}\left(\sum_{y\in G}h_1(a_{0,\varphi,y}\ y)h_1^{-1}\right)+\sum_{i=1}^n(-1)^i s_{\pi_G}\left(\sum_{y\in G} a_{i,\varphi,y}\ y\right)+(-1)^{n+1}s_{\pi_G}\left(\sum_{y\in G}a_{n+1,\varphi,y}\ y\right) \right) \\ &=r_{\iota_G}\left( \sum_{y\in G}\left( s_{\pi}(a_{0,\varphi,h_1^{-1}yh_1})+\sum_{i=1}^{n+1}(-1)^i s_{\pi}(a_{i,\varphi,y})\right)\ y \right)
\\ &=r_{\iota}\left( \sum_{y\in G}\left( s_{\pi}(a_{0,\varphi,h_1^{-1}yh_1})+\sum_{i=1}^{n+1}(-1)^i s_{\pi}(a_{i,\varphi,y})\right)\right)\ y ,
\end{align*}
where the above equalities are true by \ref{subsec21}, (\ref{eq631}) and the  definitions of $r_{\iota_G},s_{\pi_G}$.
It follows that $$a_{1,x}=r_{\iota}\left( s_{\pi}(a_{0,\varphi,h_1^{-1}xh_1})+\sum_{i=1}^{n+1}(-1)^i s_{\pi}(a_{i,\varphi,x})\right)=r_{\iota}\left(\sum_{i=0}^{n+1}(-1)^i s_{\pi}(a_{i,\varphi,x})\right)$$
since $h_1\in C_G(x)$.
\end{proof}

\begin{remark}\label{rem'}
\begin{itemize}
\item[a)] The construction of $\mathbb{B}_G^*$  is obtained by using a trick. We reinterpret the map from Proposition \ref{prop62} and an isomorphism
$$F^*:\HS^*(G,k_{\alpha}G)\rightarrow\HHS^*(k_{\alpha}G)$$
applied for $\alpha=\one_k$; here $G$ acts on $k_{\alpha}G$ by ${}^gm=\bar{g}\cdot m\cdot \bar{g}^{-1}$ for any $g\in G$ and $m\in k_{\alpha} G$. This isomorphism $F^n:CS^n(G,k_{\alpha}G) \rightarrow CS^n(k_{\alpha}G, k_{\alpha} G)$ can be easily described by
$$F^n(\psi)(\bar{g_1}\otimes\ldots\otimes \overline{g_n})=\psi(g_1,\ldots,g_n)\cdot \overline{g_1}\cdot\ldots\cdot \overline{g_n}$$
for any $\psi\in CS^n(G,k_{\alpha}G), g_1,\ldots, g_n\in G$. This isomorphism is obtained as a consequence of \cite[Theorem 3.2.4]{GG} where the general case of Hopf crossed products is investigated. As a matter of fact the same trick is applied to obtain the action of the symmetric group on the Hochschild cochain complex in (\ref{action_on_chain}).
\end{itemize}
\end{remark}

\ack{We thank  Professor J. A. Guccione for some email clarifications with respect to the isomorphism in Remark \ref{rem'}. We are also grateful to an unknown referee for his/her suggestions
which significantly improved the first version of this article.

\end{document}